\documentclass[reqno,10pt]{amsart}
\usepackage{amsfonts, amsmath, amssymb, amsthm, amsrefs, color}

\allowdisplaybreaks[1]

\oddsidemargin=0.2in

 \evensidemargin=0.2in
\baselineskip=16pt

   \addtolength{\textwidth}{1cm}
   \addtolength{\textheight}{2.2cm}
    \addtolength{\topmargin}{-1.5cm}
  \setlength{\textwidth}{140mm}
 \setlength{\textheight}{235mm}

\makeatletter
\numberwithin{equation}{section}
\numberwithin{figure}{section}
\theoremstyle{plain}
\newtheorem{thm}{\protect\theoremname}
  \theoremstyle{definition}
  
  \theoremstyle{remark}
  
  \theoremstyle{plain}
  \newtheorem{lem}[thm]{\protect\lemmaname}
  \theoremstyle{plain}
  \newtheorem{prop}[thm]{\protect\propositionname}

\newcommand{\eps}{\varepsilon}

\makeatother

\newcommand{\e}{\varepsilon}
 \renewcommand{\(}{\left(}
\renewcommand{\)}{\right)}
\newcommand{\M}{\mathcal{M}}
\newcommand{\g}{\mathfrak g}

  \providecommand{\definitionname}{Definition}
  \providecommand{\lemmaname}{Lemma}
  \providecommand{\propositionname}{Proposition}
  \providecommand{\remarkname}{Remark}
\providecommand{\theoremname}{Theorem}

\begin{document}

\title[Blowing-up solutions ...]{Blowing-up solutions concentrating along minimal submanifolds for some supercritical
elliptic problems on Riemannian manifolds}

\author{Marco Ghimenti, Anna Maria Micheletti, Angela Pistoia}

\maketitle

\begin{abstract}
  Let $(M,g)$ and $(K,\kappa)$ be two Riemannian manifolds of dimensions $m$ and $k ,$ respectively. Let   $\omega\in C^2(N),$ $\omega> 0.$
  The warped product $ M\times _\omega K$ is the $ (m+k)$-dimensional product manifold
$M\times K$ furnished with  metric   $  g+\omega^2 \kappa.$
We prove that    the  supercritical problem
$$-\Delta _{g+\omega^2 \kappa}u+h u=u^{ {m+2\over m-2} \pm\e},\ u>0,\ \hbox{in}\ (M\times _\omega K,g+\omega^2 \kappa)$$
has a solution which concentrate along a $k$-dimensional minimal submanifold $\Gamma$ of $M\times _\omega N$ as the real parameter $\e$ goes to zero, provided the function $h$  and the sectional curvatures
along $\Gamma$ satisfy a suitable condition.
 \end{abstract}

{\bf Keywords}: supercritical problem, concentration along minimal submanifold

{\bf  AMS subject classification}: 35B10, 35B33, 35J08, 58J05

\section{Introduction and statement of main results}\label{intro}

We deal with the semilinear elliptic equation
\begin{equation}\label{p}
 -\Delta _\g u+h u=u^{p-1},\ u>0,\ \hbox{in}\ (\M,\g)
 \end{equation}
where   $(\M,\g)$  is a $n-$dimensional compact Riemannian manifold without boundary, $h$ is a $C^1-$real function on $\M$   s.t. $-\Delta_\g+h$  is coercive
and $p>2$.

 The compactness of the embedding $H^1_\g(\M)\hookrightarrow L^p_\g(\M)$   for any $p\in(2,2^*_n)$
where $2^*_n:={2n\over n-2}$ if  $n\ge3$ and $ 2^*_n:=+\infty$ if $n=2,$ ensures that
$$\inf\limits_{u\in H^1_\g(\M)\atop u\not=0}{\int\limits_\M\(|\nabla_\g u|^2+h u^2\)d\mu_\g\over \(\int\limits_\M | u|^pd\mu_\g\)^{2/p}} $$
 is achieved and so problem \eqref{p}  has always a solution for any $p\in(2,2^*_n).$

In the critical case, i.e.  $p=2^*_n,  $  the situation turns out to be more complicated. In particular, the existence of solutions is related to the position of the potential $h $ with respect to the geometric potential $h_g:= {n-2\over 4(n-1)} S_\g$, where $S_\g$ is the scalar curvature of the manifold.
If $h\equiv h_\g$, then problem \eqref{p}  is referred to as  the   Yamabe problem and it has always a solution (see Aubin \cites{Aub1,Aub2}, Schoen \cite{Sch1}, Trudinger \cite{Tru}, and Yamabe \cite{Yam} for early references on the subject). When $h< h_\g$ somewhere in $\M$, existence of a solution is guaranteed by a minimization argument (see for example Aubin \cites{Aub1,Aub2}). The situation is extremely delicate when  $h \ge h_\g$ because blow-up phenomena can occur as pointed out by Druet in \cite{D1,D2}.\\
The supercritical case $p>2^*_n$ is even more difficult to deal with.
A first result in this direction is a perturbative result   due to Micheletti, Pistoia and V\'etois \cite{MPV}.
They consider the almost critical problem \eqref{p} when
$p=2^*_n \pm\varepsilon$, i.e.
if $p=2^*_n-\varepsilon$ the problem \eqref{p} is slightly subcritical and if $p=2^*_n+\varepsilon$ the problem \eqref{p} is slightly supercritical. They prove the following results.
\begin{thm}\label{mipive}  Assume
 $n\ge6  $ and  $\xi_0\in \M$ is a non degenerate critical point of $h-{n-2\over 4(n-1)}S_\g.$
Then
 \begin{itemize}
 \item[(i)] if $ h(\xi_0)>{n-2\over 4(n-1)}S_\g(\xi_0)$ then   the slightly subcritical   problem \eqref{p} with $p=2^*_{n}-1-\e,$ has a solutions $u_\e$ which concentrates  at $\xi_0,$
 \item[(ii)] if $ h(\xi_0)<{n-2\over 4(n-1)}S_\g(\xi_0)$ then the slightly supercritical problem \eqref{p} with $p=2^*_{n}-1-\e,$ has a solutions $u_\e$ which concentrates  at $\xi_0$ as $\e\rightarrow 0$.
\end{itemize}
\end{thm}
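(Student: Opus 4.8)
My plan is to prove Theorem~\ref{mipive} by a finite--dimensional Lyapunov--Schmidt reduction built on the Aubin--Talenti bubbles. As approximate solution I would take the function $W_{\delta,\xi}$ obtained by transplanting, through the exponential map and a fixed cut--off, the standard bubble $U_\delta(y)=\alpha_n\bigl(\delta/(\delta^2+|y|^2)\bigr)^{(n-2)/2}$ — the positive solution of $-\Delta U=U^{2^*_n-1}$ in $\mathbb R^n$ — centred at a point $\xi$ running in a small geodesic ball around $\xi_0$, with concentration parameter $\delta>0$ small (possibly after multiplying by the conformal factor that normalises the scalar curvature at $\xi$, which simplifies the later expansion). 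The relevant scaling, to be confirmed by the energy expansion below, is $\delta=\delta(\e)$ with $\delta^2\sim\e$. Following the dilation and translation invariances of the limit equation I would introduce the $(n+1)$--dimensional approximate kernel $K_{\delta,\xi}=\mathrm{span}\{\partial_\delta W_{\delta,\xi},\ \partial_{\xi_1}W_{\delta,\xi},\dots,\partial_{\xi_n}W_{\delta,\xi}\}$ and split $H^1_\g(\M)=K_{\delta,\xi}\oplus K_{\delta,\xi}^{\perp}$ in the subcritical case; in the supercritical case, where $H^1_\g(\M)\not\hookrightarrow L^p_\g(\M)$ and the variational structure in $H^1_\g$ is lost, I would instead run the reduction in weighted H\"older or $L^q$ spaces on which $(-\Delta_\g+h)^{-1}$ lands in $C^0(\M)$, so that the supercritical nonlinearity is controlled — this is the device that lets one treat cases (i) and (ii) in parallel.

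The first analytic step is the resolution of the auxiliary equation: for each fixed $(\delta,\xi)$ one projects equation~\eqref{p} onto $K_{\delta,\xi}^{\perp}$ and seeks a solution $u=W_{\delta,\xi}+\phi$ with $\phi\in K_{\delta,\xi}^{\perp}$ small. This rests on three ingredients: (a) the uniform (in $\delta$ small, $\xi$ near $\xi_0$) invertibility on $K_{\delta,\xi}^{\perp}$ of the linearised operator $\phi\mapsto-\Delta_\g\phi+h\phi-(p-1)W_{\delta,\xi}^{p-2}\phi$, which follows from the nondegeneracy of the Talenti bubble (Bianchi--Egnell, Rey) after transplantation; (b) a sharp estimate, in the working norm, of the error $\mathcal E_{\delta,\xi}:=-\Delta_\g W_{\delta,\xi}+hW_{\delta,\xi}-W_{\delta,\xi}^{p-1}$, whose size is dictated by the curvature of $(\M,\g)$ — entering through the normal--coordinate expansions of $\Delta_\g$ and of the volume density — and by the exponent shift $\pm\e$; (c) a contraction--mapping argument yielding $\phi=\phi_{\delta,\xi}$, depending in a $C^1$ fashion on $(\delta,\xi)$, together with a quantitative smallness bound.

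The core of the argument is then the $C^1$--expansion, as $\e\to0$ and uniformly for $(\delta,\xi)$ in the admissible range, of the reduced functional $\widetilde J_\e(\delta,\xi):=J_\e(W_{\delta,\xi}+\phi_{\delta,\xi})$, where $J_\e(u)=\tfrac12\int_\M(|\nabla_\g u|^2+hu^2)\,d\mu_\g-\tfrac1p\int_\M(u^+)^p\,d\mu_\g$ (in the supercritical case one manipulates, instead, the finite--dimensional map whose zeros correspond to solutions). The expected shape is
\begin{equation*}
\widetilde J_\e(\delta,\xi)=c_0+\Bigl(\mp c_1\,\e\log\delta+c_2\,\delta^2\bigl(h(\xi)-{\textstyle\frac{n-2}{4(n-1)}}S_\g(\xi)\bigr)\Bigr)+o\bigl(\e+\delta^2\bigr),
\end{equation*}
with constants $c_0,c_1,c_2>0$ depending only on $n$ and the sign $\mp$ matching the sub/supercritical choice $p=2^*_n\mp\e$. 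The term $\mp c_1\e\log\delta$ comes from $\tfrac1p\int_\M W_{\delta,\xi}^p$, writing $W^{\mp\e}=\exp(\mp\e\log W)$ and using $\int_{\mathbb R^n}U_\delta^{2^*_n}\log U_\delta=-\tfrac{n-2}2(\log\delta)\int_{\mathbb R^n}U_1^{2^*_n}+\mathrm{const}$; the coefficient $\tfrac{n-2}{4(n-1)}$ of $S_\g$ appears because the $\delta^2$--term is governed by the conformal Laplacian, which is exactly where the condition on $h$ relative to the geometric potential $h_\g$ enters. The hypothesis $n\ge6$ is what guarantees that the discarded contributions — the Robin--function term $\sim\delta^{n-2}$, the Weyl--tensor term $\sim\delta^4$ (borderline when $n=6$), and the quadratic contribution of $\phi_{\delta,\xi}$ — are genuinely $o(\e+\delta^2)$.

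To conclude, I would rescale $\delta=\e^{1/2}t$ and divide by $\e$, whereupon $\widetilde J_\e$ becomes, modulo an additive constant and a $C^1$--perturbation vanishing as $\e\to0$, a small perturbation of the model
\[(t,\xi)\longmapsto \mp c_1\log t+c_2\,t^2\bigl(h(\xi)-{\textstyle\frac{n-2}{4(n-1)}}S_\g(\xi)\bigr).\]
For the two sign combinations in the statement — $h(\xi_0)>h_\g(\xi_0)$ with $p$ slightly subcritical (case (i)), or $h(\xi_0)<h_\g(\xi_0)$ with $p$ slightly supercritical (case (ii)) — the bracketed factor has the right sign, so in the $t$ variable the model has a nondegenerate critical point $t_0>0$ (a minimum, resp.\ a maximum), while in the $\xi$ variable nondegeneracy is supplied by the assumption that $\xi_0$ is a nondegenerate critical point of $h-h_\g$; jointly, $(t_0,\xi_0)$ is a nondegenerate critical point, hence $C^1$--stable. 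A standard local--degree (or implicit--function) argument then produces a critical point $(\delta_\e,\xi_\e)$ of $\widetilde J_\e$ with $\delta_\e\to0$ and $\xi_\e\to\xi_0$, and $u_\e:=W_{\delta_\e,\xi_\e}+\phi_{\delta_\e,\xi_\e}$ is the sought solution of \eqref{p}, positivity following from the maximum principle since $u_\e$ is a small perturbation of the positive bubble. In my view the main obstacle is twofold: obtaining the $C^1$--accurate expansion above with the correct curvature constant, which forces a careful Riemannian normal--coordinate expansion of $\Delta_\g$ and $d\mu_\g$ to the needed order together with a delicate handling of the $\pm\e$ power; and, in the supercritical regime, running the whole reduction outside the Sobolev range, which requires the weighted--space framework mentioned above so that the uniform invertibility in (a) and the error estimate in (b) still make sense.
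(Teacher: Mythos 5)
Your outline is essentially the reduction scheme that the paper itself uses: note first that Theorem~\ref{mipive} is not proved in this paper but quoted from \cite{MPV}; what the paper does prove (Theorem~\ref{main}) is the anisotropic generalization, and its proof follows exactly the skeleton you describe — bubbles glued via the exponential map and a cut--off, the $(m+1)$--dimensional approximate kernel, invertibility of the linearized operator on its orthogonal complement, the error and contraction estimates, a reduced energy expanded as constant $+\,\e\log|\e|$ term $+\,|\e|\bigl[(h-\frac{m-2}{4(m-1)}S_g+\dots)t - \frac{\e}{|\e|}\frac{(m-2)^2}{8}\log t\bigr]+o(|\e|)$ in $C^1$ sense, with $\delta^2=|\e|t$ (your $\delta=\e^{1/2}t$ is the same up to relabeling $t^2\leftrightarrow t$), and the sign discussion producing a nondegenerate critical point $(t_0,\xi_0)$ stable under $C^1$ perturbation. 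The one genuine divergence is your treatment of the supercritical case: you propose abandoning the variational structure and running the reduction in weighted H\"older or $L^q$ spaces where $(-\Delta_\g+h)^{-1}$ maps into $C^0$, concluding via the finite--dimensional reduced map rather than the reduced energy. The paper (following \cite{MPV}) instead keeps the variational framework in both regimes by working in the Banach space $H_\varepsilon=H^1_g(M)\cap L^{s_\varepsilon}_g(M)$ with $s_\varepsilon=2^*_m-\frac m2\varepsilon>2^*_m$ when $\varepsilon<0$, using the regularity-type bound $|i_H^*(w)|_{s}\le C|w|_{ms/(m+2s)}$ for $s>2^*_m$ (inequality \eqref{Rem:GT}); this lets the sub- and supercritical cases be handled in one stroke with the same reduced energy $\tilde J_\e$, avoiding the need to redo the solvability and stability arguments in a non-variational setting. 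Your route is viable (it is the standard device in other supercritical constructions) but heavier, since the invertibility of the linearization, the error estimate, and the final degree argument for the reduced map must all be re-established in the weighted norms, whereas the paper's choice of $H_\varepsilon$ reduces the supercritical case to an almost verbatim repetition of the subcritical one. One small caution: the role of $n\ge6$ is not only to make the $\delta^4$ (Weyl) and $\delta^{n-2}$ (Green's-function) terms negligible as you say, but also to ensure the quadratic remainder estimates for $f_\e$ and the bound $\|\phi_{\delta,\xi}\|=O(\e|\log\e|)$ are strong enough that the $\phi$--contribution to the energy is $o(\e)$, which is where the dimension restriction actually bites in the fixed-point and $C^1$ estimates.
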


Now, for any integer $0\le k\le n-3 $ let $2^*_{n,k}={2(n-k)\over n-k-2} $  be the \textit{$(k+1)-$st critical exponent}.
We remark that $2^*_{n,k}=2^*_{n-k,0}$   is nothing but the critical exponent for the Sobolev embedding  ${\mathrm H}^ 1_\g(\M)\hookrightarrow {\mathrm L}^{q}_\g(\M),$  when $(\M,\g)$ is a $(n-k)-$dimensional Riemannian manifold.
In particular,  $ 2^*_{n,0}={2 n\over n- 2}$    is the usual Sobolev critical exponent.\\
We can summarize the results proved by Micheletti, Pistoia and V\'etois just saying that  problem \eqref{p} when $p\to 2^*_{n,0}$ (i.e. $k=0$) has positive  solutions   blowing-up at points.
    Note that   a  point  is a   $0-$dimensional manifold!  Therefore, a natural question arises:
  \textit{ does  problem \eqref{p} have solutions blowing-up at  $k-$dimensional submanifolds   when $p\to 2^*_{n,k}$?}\\

In the present paper,   we give a positive answer
when $(\M,\g)$ is a warped product manifold.

We recall the notion of warped product introduced by Bishop and O'Neill in \cite{BO}.
Let $(M,g)$ and $(K,\kappa)$ be two riemannian manifolds of dimensions $m$ and $k ,$ respectively. Let   $\omega\in C^2(M),$ $\omega> 0$ be a differentiable function.  The warped product $\M = M\times _\omega K$ is the product (differentiable) $n-$dimensional ($n:=m+k$) manifold
$M\times K$ furnished with the riemannian   metric is $\g=g +\omega^2\kappa.$ $\omega$ is called {\em warping function}.
For example, every surface of revolution (not crossing the axis of revolution) is
isometric to a warped product, with $M$ the generating curve, $K=S^1$ and $\omega(x)$ the distance from $x \in M$ to the axis of revolution.

It is not difficult to check that if $u\in C^2(M\times _\omega K)$   then
\begin{equation}\label{equ2}\Delta _\g u=\Delta _{g } u +{m \over \omega}g \(\nabla_g f, \nabla _g u\)+{1\over \omega^2}\Delta _{\kappa} u.\end{equation}
Assume $h$ is invariant with respect to $K,$ i.e. $h(x,y)=h(x)$ for any $(x,y)\in M\times K.$
If we look for solutions to \eqref{p} which are invariant with respect to $K,$ i.e. $u(x,y)=v(x)$ then by \eqref{equ2}
we immediately deduce that $u$ solves \eqref{p} if and only if $v$ solves
\begin{equation}\label{equ3}-\Delta _{g } v -{m \over \omega}g \(\nabla _{g } f, \nabla  _{g } v\)+h v=v^{p-1} \quad\hbox{in}\ (M,g ).
\end{equation}
or equivalently
\begin{equation}\label{equ4}- \mathrm{div}_{g }\( {\omega^m}\nabla _{g } v\)+ {\omega^m}h v ={\omega^m}v^{p-1},\ v>0\quad \hbox{in}\ (M,g ).
\end{equation}
Here we are interested in studying problem \eqref{equ4} when the exponent $p$ approaches the  higher critical exponent $2^*_{n,k}=2^*_{m}$, i.e. $p=2^*_{m}-\varepsilon$ for some small real parameter $\varepsilon.$
It is clear that if $v$ is a solution to problem \eqref{equ3} which concentrates at a point $\xi_0\in M$ then $u(x,y)=v(x)$ is a solution to problems \eqref{p}
which concentrates along the fiber $\{\xi_0\}\times K$, which is a $k -$dimensional submanifold of $M.$
It is important to notice the the fiber $\{\xi_0\}\times K$ is   totally geodesic in $ M\times _\omega K$ (and in particular a minimal submanifold of $ M\times _\omega K$) if $\xi_0$ is a critical point of the warping function $\omega.$\\

Therefore, we are lead to study the more general anisotropic almost critical problem
\begin{equation}\label{equ5}- \mathrm{div}_{g }\( a(x)\nabla _{g } u\)+ {a(x)}h u ={a(x)}u^{{m+2\over m-2}-\varepsilon},\ u>0\quad \hbox{in}\ (M,g )
\end{equation}
where $(M,g)$ is a $m-$dimensional compact Riemannian manifolds, $a\in C^2(M)$ with $\min_M a>0,$ $h\in C^2(M)$ such that the anisotropic operator $- \mathrm{div}_{g }\( a(x)\nabla _{g } u\)+ {a(x)}h u $ is coercive
and $\varepsilon\in\mathbb R.$

Our main result reads as follows.
\begin{thm}\label{main}
 Assume  $m\ge9  $ and
 $\xi_{0}\in M$ is a non degenerate critical point of $a$.  Then
 \begin{itemize}
 \item[(i)] if $ h(\xi_0)>{m-2\over 4(m-1)}S_g(\xi_0)-{3 (m-2)\over 2(m-1)}{\Delta_g a (\xi_0)\over  a(\xi_0)}$ then  if $\varepsilon>0$ is small enough the slightly subcritical   problem \eqref{equ5} has a solutions $u_\e$ which concentrates at $\xi_0$  as $\e\rightarrow 0$,
 \item[(ii)] if $ h(\xi_0)<{n-2\over 4(n-1)}S_g(\xi_0)-{3 (m-2)\over 2(m-1)}{\Delta_ga(\xi_0)\over a (\xi_0)}$ then  if $\varepsilon<0$ is small enough  then the slightly supercritical problem \eqref{equ5}   has a solutions $u_\e$ which concentrates  at $\xi_0$  as $\e\rightarrow 0$.
\end{itemize}
\end{thm}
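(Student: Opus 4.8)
The plan is to carry out a finite--dimensional Lyapunov--Schmidt reduction, in the spirit of \cite{MPV}, the genuinely new point being the way the weight $a$ shows up in the reduced energy. First I would fix the variational framework for \eqref{equ5}: since for $\varepsilon<0$ the exponent is supercritical, one cannot work in $H^1_g(M)$, and, following \cite{MPV}, one replaces it by a space $X$ tailored to the concentrating profile (a weighted Sobolev/$L^\infty$--type space in the rescaled variable), on which the map $u\mapsto u-i^*_a\big((u^+)^{p-1}\big)$ is of class $C^1$, where $p-1=\tfrac{m+2}{m-2}-\varepsilon$, $u^+=\max(u,0)$, and $i^*_a$ is the adjoint of the embedding with respect to the coercive bilinear form $\langle u,v\rangle:=\int_M a\,(\nabla_g u\cdot\nabla_g v+h\,uv)\,d\mu_g$. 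Then solving \eqref{equ5} is equivalent to finding $u\in X$ with $u=i^*_a\big((u^+)^{p-1}\big)$, and positive solutions correspond to critical points of $J_\varepsilon(u)=\tfrac12\langle u,u\rangle-\tfrac1p\int_M a\,|u^+|^{p}\,d\mu_g$.

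As an ansatz I take the Aubin--Talenti instanton transplanted to $M$: in $g$--normal coordinates about a point $\xi$ near $\xi_0$, $W_{\delta,\xi}(x)=\chi(\exp_\xi^{-1}x)\,\delta^{-(m-2)/2}U\big(\exp_\xi^{-1}(x)/\delta\big)$ with $U(y)=\alpha_m(1+|y|^2)^{-(m-2)/2}$, $\chi$ a cut--off, $\delta>0$ small (and, if convenient, a lower--order correction added by hand). Observe that at leading order $-\dv_g(a\nabla_g\cdot)$ behaves like $-a(\xi)\Delta$ and $a(\xi)$ cancels in $-a(\xi)\Delta U=a(\xi)U^{(m+2)/(m-2)}$, so $a$ does not rescale the bubble but acts at the next order. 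Letting $Z^0_{\delta,\xi},\dots,Z^m_{\delta,\xi}$ be the $m+1$ transplanted Jacobi fields (dilation and translations), which span the approximate kernel $K_{\delta,\xi}$ of the linearized operator, and $\Pi^\perp_{\delta,\xi}$ the $\langle\cdot,\cdot\rangle$--orthogonal projection onto $K^\perp_{\delta,\xi}$, the reduction rests on two standard lemmas. A \emph{linear theory}: $\Pi^\perp_{\delta,\xi}\big(\mathrm{Id}-i^*_a\big((p-1)W_{\delta,\xi}^{p-2}\,\cdot\,\big)\big)\Pi^\perp_{\delta,\xi}$ is invertible on $K^\perp_{\delta,\xi}$, uniformly in $\delta,\xi$ and in small $\varepsilon$; this follows by the usual contradiction--and--blow--up argument, which reduces to the nondegeneracy of $U$ in $\mathbb R^m$ (in the blow--up limit $a(\exp_\xi(\delta\,\cdot))\to a(\xi_0)$ is constant, hence harmless). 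A \emph{contraction argument}: for every $(\delta,\xi)$ there is a unique small $\phi_{\delta,\xi}\in K^\perp_{\delta,\xi}$, of class $C^1$ in $(\delta,\xi)$, with $\|\phi_{\delta,\xi}\|_X\le C\,\|W_{\delta,\xi}-i^*_a(W_{\delta,\xi}^{p-1})\|_X$, such that $W_{\delta,\xi}+\phi_{\delta,\xi}-i^*_a\big((W_{\delta,\xi}+\phi_{\delta,\xi})^{p-1}\big)\in K_{\delta,\xi}$; critical points of the reduced energy $\widetilde J_\varepsilon(\delta,\xi):=J_\varepsilon(W_{\delta,\xi}+\phi_{\delta,\xi})$ then produce genuine solutions of \eqref{equ5}.

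The heart of the proof is the asymptotic expansion of $\widetilde J_\varepsilon$. Using the normal--coordinate expansions of the metric (scalar curvature $S_g$) and of $a$ (Taylor at $\xi$; recall $\nabla a(\xi_0)=0$), the expansion $W^{p}=W^{2m/(m-2)}-\varepsilon\,W^{2m/(m-2)}\log W+\cdots$, and the fact that $\|\phi_{\delta,\xi}\|_X^2$ is negligible, one should arrive at, for $\delta>0$ and $|\varepsilon|$ small and $\xi$ near $\xi_0$,
\[
\widetilde J_\varepsilon(\delta,\xi)=c_0\,a(\xi)+\varepsilon\,a(\xi)\big(c_1+c_2\log\delta\big)+\delta^2\Big(c_3\,a(\xi)\big(h(\xi)-\tfrac{m-2}{4(m-1)}S_g(\xi)\big)+c_4\,\Delta_g a(\xi)\Big)+o\big(\delta^2+|\varepsilon|\big),
\]
with $c_0>0$, $c_2<0$, $c_3>0$ and $c_4/c_3$ a positive constant which, once all the bubble integrals involved are reduced to $\int_{\mathbb R^m}U^2$ by the standard identities, is exactly the one entering the statement of Theorem~\ref{main}; this is what produces the correction term $-\tfrac{3(m-2)}{2(m-1)}\tfrac{\Delta_g a}{a}$. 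The hypothesis $m\ge9$ enters precisely here, guaranteeing that all integrals occurring in this expansion converge and that the remainder is of strictly lower order than the principal part --- the analogue of, but more restrictive than, the condition $n\ge6$ in Theorem~\ref{mipive}.

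It then remains to find a stable critical point of $\widetilde J_\varepsilon$. Since $\xi_0$ is a \emph{nondegenerate} critical point of $a$ and the $\xi$--dependence of $\widetilde J_\varepsilon$ is governed by $c_0\,a(\xi)$, with invertible Hessian $c_0\nabla^2a(\xi_0)$, the implicit function theorem gives, for each small $\delta$, a unique critical point $\xi_\delta\to\xi_0$ of $\xi\mapsto\widetilde J_\varepsilon(\delta,\xi)$. Writing $\Lambda:=h(\xi_0)-\tfrac{m-2}{4(m-1)}S_g(\xi_0)+\tfrac{3(m-2)}{2(m-1)}\tfrac{\Delta_g a(\xi_0)}{a(\xi_0)}$, the one--variable function $\delta\mapsto\widetilde J_\varepsilon(\delta,\xi_\delta)$ behaves like $a(\xi_0)\big(c_0+c_2\,\varepsilon\log\delta+c_3\,\Lambda\,\delta^2\big)$ up to lower order, and an elementary computation (changing variables $\delta=t\sqrt{|\varepsilon|}$) shows that, provided $\varepsilon\,\Lambda>0$, it has a nondegenerate critical point $\delta_\varepsilon$ with $\delta_\varepsilon\to0$, $\delta_\varepsilon^2\sim\tfrac{|c_2|}{2c_3\Lambda}\,\varepsilon$ --- that is, $\varepsilon>0$ and $\Lambda>0$ in case (i), $\varepsilon<0$ and $\Lambda<0$ in case (ii). Being nondegenerate, the pair $(\delta_\varepsilon,\xi_{\delta_\varepsilon})$ survives the $o(\cdot)$--perturbations and yields a critical point $u_\varepsilon=W_{\delta_\varepsilon,\xi_{\delta_\varepsilon}}+\phi_{\delta_\varepsilon,\xi_{\delta_\varepsilon}}$ of $J_\varepsilon$, hence a solution of \eqref{equ5}; testing the equation against the negative part of $u_\varepsilon$ and using the coercivity of $\langle\cdot,\cdot\rangle$ gives $u_\varepsilon\ge0$, then $u_\varepsilon>0$ by the maximum principle, while $\delta_\varepsilon\to0$ and $\xi_{\delta_\varepsilon}\to\xi_0$ give concentration at $\xi_0$. (Applied with $a=\omega^m$ and $\xi_0$ a critical point of the warping function $\omega$, this gives, as explained in the introduction, solutions of \eqref{p} concentrating along the minimal submanifold $\{\xi_0\}\times K$.) The main obstacle is twofold: the bookkeeping in the energy expansion, in particular the identification of the constant $c_4/c_3$ as the coefficient in the statement; and, on the technical side, the supercritical regime $\varepsilon<0$, which forces the whole reduction --- error estimates, linear theory, contraction --- to be run in the weighted space $X$ instead of $H^1_g(M)$, and it is this that makes the dimensional restriction $m\ge9$ unavoidable.
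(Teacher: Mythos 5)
Your proposal is correct in outline and follows essentially the same route as the paper: a Ljapunov--Schmidt reduction in the framework of \cite{MPV} with the adjoint operator associated to the weighted form $\int_M a(\nabla_g u\nabla_g v+huv)\,d\mu_g$ (the paper's space for the supercritical side is $H^1_g(M)\cap L^{s_\varepsilon}_g(M)$ rather than a weighted $L^\infty$-type space, but it plays the same role), the expansion of the reduced energy at the scale $\delta\sim\sqrt{|\varepsilon|}$ producing exactly the correction $\tfrac{3(m-2)}{2(m-1)}\,\Delta_g a(\xi_0)/a(\xi_0)$, and the nondegeneracy of $\xi_0$ as a critical point of $a$ to capture the concentration parameters. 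The only differences are cosmetic: the paper keeps the center fixed at $\xi_0$ and introduces a rescaled translation parameter $\eta$, obtaining a joint nondegenerate critical point of the reduced function $\Phi(t,\eta)$ from the $C^1$-uniform expansion, whereas you move the center $\xi$ on $M$ and split the search into an implicit-function step in $\xi$ followed by the one-dimensional problem in $\delta$, which amounts to the same argument.
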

 In particular, Theorem \ref{main} applies to the case $a=\omega^m$ where $\omega$ is the warping function. We recall that if  $\xi_0$ is a critical point of the warping function $\omega,$ then the fiber $\Gamma:=\{\xi_0\}\times K$ is  a minimal $k-$dimensional submanifold of the warped product manifold $\M\times_\omega K $ equipped with the metric $\g=g+\omega^2\kappa.$ Let
 $$\Sigma_\g(\Gamma ):= {m-2\over 4(m-1)}S_g(\xi_0 )-{3m(m-2)\over 2(m-1)}{\Delta_g \omega (\xi_0)\over\omega (\xi_0)},$$
 which turns out to be a weighted mean of sectional curvatures of   $\Gamma$.
From the above discussion and Theorem \ref{main} we immediately deduce the following result concerning the supercritical problem \eqref{p}.
 \begin{thm}\label{main1} Assume $m\ge9$,  $h$ is invariant with respect to $K $ and $p=2^*_m-\varepsilon.$

 \begin{itemize}
 \item[(i)] if $ h(\Gamma )>\Sigma_\g(\Gamma ),$   then  if $\varepsilon>0$ is small enough the supercritical  problem \eqref{p} has a solutions $u_\e,$ invariant with respect to $K,$ which concentrates along  $\Gamma$  as $\e\rightarrow 0$,
 \item[(ii)] if $ h(\Gamma )<\Sigma_\g(\Gamma ),$   then  if $\varepsilon<0$ is small enough  then the   supercritical problem \eqref{p}   has a solutions $u_\e$ invariant with respect to $K,$ which concentrates along  $\Gamma$   as $\e\rightarrow 0$.
\end{itemize}
\end{thm}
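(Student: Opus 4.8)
The plan is to obtain Theorem~\ref{main1} as a direct corollary of Theorem~\ref{main}, applied with the anisotropic weight $a=\omega^m$, by passing between $K$-invariant functions on the warped product and functions on the base $M$. First I would record the reduction already indicated above: since $h$ is $K$-invariant, formula \eqref{equ2} shows that $u(x,y)=v(x)$ solves \eqref{p} with $p=2^*_m-\e=\tfrac{2m}{m-2}-\e$ exactly when $v$ solves \eqref{equ4}, that is \eqref{equ5} with $a(x)=\omega^m(x)$ and exponent $\tfrac{m+2}{m-2}-\e$ (recall $2^*_m-1=\tfrac{m+2}{m-2}$). Because this correspondence is a bijection respecting the variational structures, the quadratic form $v\mapsto\int_M\omega^m\big(|\nabla_g v|_g^2+hv^2\big)\,d\mu_g$ associated with \eqref{equ4} is, up to a positive multiplicative constant, the restriction of the quadratic form of $-\Delta_\g+h$ to $K$-invariant functions; since $0<\min_M\omega\le\max_M\omega<\infty$, coercivity of $-\Delta_\g+h$ on $H^1_\g(M\times_\omega K)$ then forces coercivity of $-\dv_g(a\nabla_g\cdot)+ah$ on $H^1_g(M)$. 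Hence \eqref{equ5} is of the type governed by Theorem~\ref{main}.

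Next I would check the remaining hypotheses of Theorem~\ref{main} for $a=\omega^m$ at the point $\xi_0$. The restriction $m\ge9$ is assumed, and $a=\omega^m\in C^2(M)$ with $\min_M a>0$ because $\omega>0$. From $\nabla_g a=m\,\omega^{m-1}\nabla_g\omega$ the critical points of $a$ coincide with those of $\omega$, and at such a point, where $\nabla_g\omega(\xi_0)=0$, one gets
\[
\nabla_g^2 a(\xi_0)=m\,\omega^{m-1}(\xi_0)\,\nabla_g^2\omega(\xi_0),
\]
so $\xi_0$ is a non-degenerate critical point of $a$ if and only if it is one of $\omega$ --- this is the standing assumption on $\xi_0$, under which $\Gamma=\{\xi_0\}\times K$ is the non-degenerate minimal $k$-submanifold of $M\times_\omega K$ appearing in the statement. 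For the threshold, using $\nabla_g\omega(\xi_0)=0$ once more,
\[
\Delta_g a(\xi_0)=\dv_g\!\big(m\,\omega^{m-1}\nabla_g\omega\big)(\xi_0)=m\,\omega^{m-1}(\xi_0)\,\Delta_g\omega(\xi_0),
\]
hence $\Delta_g a(\xi_0)/a(\xi_0)=m\,\Delta_g\omega(\xi_0)/\omega(\xi_0)$ and
\[
\frac{m-2}{4(m-1)}S_g(\xi_0)-\frac{3(m-2)}{2(m-1)}\,\frac{\Delta_g a(\xi_0)}{a(\xi_0)}=\Sigma_\g(\Gamma),
\]
while $h(\Gamma)=h(\xi_0)$ by $K$-invariance. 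So for $a=\omega^m$ the conditions in Theorem~\ref{main}(i)--(ii) read precisely $h(\Gamma)>\Sigma_\g(\Gamma)$ and $h(\Gamma)<\Sigma_\g(\Gamma)$.

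Finally, the conclusion is immediate. If $h(\Gamma)>\Sigma_\g(\Gamma)$, then by the first two paragraphs Theorem~\ref{main}(i) supplies, for $\e>0$ small, a positive solution $v_\e$ of \eqref{equ5} with $a=\omega^m$ concentrating at $\xi_0$; setting $u_\e(x,y):=v_\e(x)$ produces a $K$-invariant positive solution of \eqref{p} on $(M\times_\omega K,\g)$ with $p=2^*_m-\e$, and, being constant along the fibers, $u_\e$ has the whole fiber $\Gamma=\{\xi_0\}\times K$ as its concentration set, i.e. it concentrates along $\Gamma$ as $\e\to0$; note that $2^*_m=2^*_{n,k}$, consistently with concentration along a $k$-dimensional submanifold. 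Part (ii) is identical, with Theorem~\ref{main}(ii) and $\e<0$. I do not expect any genuine obstacle in this deduction: the only points requiring attention are the transfer of non-degeneracy of $\xi_0$ from $\omega$ to $\omega^m$, the algebraic identity exhibiting $\Sigma_\g(\Gamma)$ as exactly the threshold of Theorem~\ref{main} for $a=\omega^m$, and the elementary fact that concentration of $v_\e$ at $\xi_0$ lifts to concentration of $u_\e$ along the fiber. All the analytic content is contained in Theorem~\ref{main}, whose proof rests on a finite-dimensional Lyapunov--Schmidt reduction and constitutes the core of the paper.
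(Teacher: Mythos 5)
Your proposal is correct and follows essentially the same route as the paper, which deduces Theorem \ref{main1} directly from Theorem \ref{main} with $a=\omega^m$ via the equivalence between $K$-invariant solutions of \eqref{p} and solutions of \eqref{equ4}. You merely spell out the details the paper leaves implicit (transfer of non-degeneracy from $\omega$ to $\omega^m$, the identity $\Delta_g a(\xi_0)/a(\xi_0)=m\,\Delta_g\omega(\xi_0)/\omega(\xi_0)$ identifying the threshold with $\Sigma_\g(\Gamma)$, and the lifting of concentration at $\xi_0$ to concentration along the fiber $\Gamma$).
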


Let us state some open problems about the anisotropic problem \eqref{equ5}.

\begin{itemize}
\item[(a)] Theorem \ref{main} holds true when $m\ge9.$
The interesting question concerns the low dimensions $m=3,\dots,8.$ For example, one could ask if  the results obtained by Druet \cite{D1,D2} for  the Yamabe problem in low dimensions are true anymore.
\item[(b)] Theorem \ref{main} holds true when the potential $h$ is different from the geometric potential $\Sigma(\Gamma)$.
It is interesting to see what happens when $h$ coincides somewhere with  $\Sigma(\Gamma)$.  In particular, it could be interesting to obtain similar results to the ones obtained by Esposito-Pistoia-Vet\'ois \cite{epv} for  the Yamabe problem.
\item[(c)] Theorem \ref{main} concerns the case $p$ close to $2^*_m$ but different from it. Is it possible to establish an existence result for the pure critical case $p=2^*_m$ similar to the results obtained for the Yamabe problem?
    \end{itemize}

Finally, we ask if similar results hold true  in the non symmetric case. More precisely,  if $p\to 2^*_{m,k}$ for some integer $k\ge1$ does there exist a solution to problem \eqref{p} which blows-up along a minimal $k-$dimensional submanifold $\Gamma$ provided the potential $h$ is different from a suitable
  weighted mean of sectional curvatures of   $\Gamma$?  Recently, Davila-Pistoia-Vaira \cite{dpv} gave a positive answer when $k=1$.
\\

The paper is organized as follows. In Section 2 we introduce the necessary framework.   In
Section 4 we reduce the problem to a finite dimensional one, via a   Ljapunov-Schmidt reduction
and in    Section 5 we study the finite dimensional problem and we prove Theorem \ref{main}.

\section{Setting of the problem}

Let $H$ the Hilbert space $H_{g}^{1}(M)$ endowed with the scalar
product
\[
\left\langle u,v\right\rangle _{H}=\int_{M}a(x)\nabla_{g}u\nabla_{g}vd\mu_{g}+\int_{M}a(x)h(x)uvd\mu_{g}.
\]
Let $\|\cdot\|_{H}$ be the norm induced
by $\left\langle \cdot,\cdot\right\rangle _{H}$, which is  equivalent to the
usual one.
We also denote the usual $L^{q}$-norm of a function $u\in L_{g}^{q}(M)$
by
\[
|u|_{g}=\left(\int_{M}|u|^{q}d\mu_{g}\right)^{1/q}.
\]
Let $i_{H}^{*}:L_g^{\frac{2m}{m+2}}(M)\rightarrow H$ be the adjoint operator
of the embedding $i:H\rightarrow L^{2_{m}^{*}}_g(M)$, that is
\begin{eqnarray*}
u=i_{H}^{*}(v) & \Leftrightarrow & \left\langle i_{H}^{*}(v),\varphi\right\rangle _{H}=\int_{M}v\varphi d\mu_{g}\ \ \ \forall\varphi\in H\\
 & \Leftrightarrow & \int_{M}a(x)\nabla_{g}u\nabla_{g}\varphi d\mu_{g}+\int_{M}a(x)h(x)u\varphi d\mu_{g}=\int_{M}v\varphi d\mu_{g}\ \ \ \forall\varphi\in H
\end{eqnarray*}

We recall (see \cite{MPV}) the following
inequality
\begin{equation}\label{Rem:GT}
\left|i_{H}^{*}(w)\right|_{s}\le C\left|w\right|_{\frac{ms}{m+2s}}\text{ for }w\in L^{\frac{ms}{m+2s}}
\end{equation}
 where $s>\frac{2m}{m-2}=2_{m}^{*}$ so that $\frac{ms}{m+2s}>\frac{2m}{m+2}$.

We consider the Banach space ${H}_{\varepsilon}=H_{g}^{1}(M)\cap L_{g}^{s_{\varepsilon}}(M)$
with the norm
\[
\|u\|_{H_\varepsilon}=\|u\|_{H}+\left|u\right|_{s_{\varepsilon}}
\]
where we set
\[
s_{\varepsilon}=\left\{ \begin{array}{cc}
2_{m}^{*}-\frac{m}{2}\varepsilon & \text{ if }\varepsilon<0\\
2_{m}^{*} & \text{ if }\varepsilon>0
\end{array}\right.
\]
We remark that in the subcritical case $\varepsilon>0$ the space
$ {H}_{\varepsilon}$ is nothing but the space $H_{g}^{1}(M)$ with norm
$\|\cdot\|_{H}$.
By \eqref{Rem:GT}, we easily deduce that
\begin{equation}
\|i_{H}^{*}(w)\|_{H_{\varepsilon}}\le c|w|_{\frac{ms_{\varepsilon}}{m+2s_{\varepsilon}}}\text{ if }w\in L^{\frac{ms_{\varepsilon}}{m+2s_{\varepsilon}}}.\label{eq1.4a}
\end{equation}
Finally,  we can rewrite Equation (\ref{equ5}) as
\begin{equation}
u=i_{H}^{*}(a(x)f_{\varepsilon}(u))\ \ \ u\in\mathcal{H}_{\varepsilon}\label{eq:1.4}
\end{equation}
where $f_{\varepsilon}(u)=(u^{+})^{2^{*}-1-\varepsilon}$ and
$u^{+}=\max\{u,0\}$.\\

Now, let us introduce the main ingredient to build a solution to problem \eqref{eq:1.4}, namely the standard bubble
$$
U(z) :=\frac{\alpha_{m}}{\left(1+|z|^{2}\right)^{\frac{m-2}{2}}}\ \ z\in\mathbb{R}^{m}
$$
where $\alpha_{m}:=(m(m-2))^{\frac{m-2}{4}}$. It is well known that the functions
$$U_{\delta,y}(z):=\delta^{-\frac{m-2}{2}}U\( {z-y\over\delta}\), z,y\in\mathbb R^m,\ \delta>0$$
are all the positive solutions to the  critical problem $-\Delta U=U^{2_{m}^{*}-1}$ on $\mathbb R^m.$

 We are going to read the euclidean bubble $U_{\delta,y}$ on the manifold $M$ via geodesic coordinates.
Let $\chi$ be a smooth cut-off function such that $0\le\chi\le1$,
$\chi(z)=1$ if $z\in B(0,r/2)\subset\mathbb{R}^{m}$, $\chi(z)=0$
if $z\in\mathbb{R}^{m}\smallsetminus B(0,r)$, $|\nabla\chi|\le2/r$,
where $r$ is the iniectivity radius of $M$.
Let us define on $M$ the function
\begin{equation}\label{eq:W}
W_{\delta,\eta}  (x)=\left\{ \begin{array}{ccc}
\chi(\exp_{\xi_{0}}^{-1}(x))\delta^{-\frac{m-2}{2}}U(\delta^{-1}\exp_{\xi_{0}}^{-1}(x)-\eta) &  & \text{if }x\in B_{g}(\xi_{0},r)\\
0 &  & \text{if }x\in M\smallsetminus B_{g}(\xi,r)
\end{array}\right..
\end{equation}

We will look for a solution of (\ref{eq:1.4}) or, equivalently of
(\ref{equ5}), as
$
u=W_{\delta,\eta}+\Phi,
$ where
\begin{equation}
\label{delta}
\delta=\delta_{\varepsilon}(t)=\sqrt{|\varepsilon|t}\ \hbox{for some}\ t>0\ \hbox{and}\ \eta\in\mathbb R^m.
\end{equation}
We remark that as $\varepsilon $ goes to $0$ the function $W_{\delta,\eta}$ blows-up at the point $\xi_0$.
The remainder term $\Phi$ belongs to  the space $K^\perp_{\delta,\eta}$, which
is introduced as follows. It is well known  that any
solution to the linearized equation
\[
-\Delta\psi=(2_{m}^{*}-1)U^{2_{m}^{*}-2}\psi
\]
is a linear combination of the functions
\begin{eqnarray*}
V_{0}(z)=\left.\frac{\partial}{\partial\delta}\left[\delta^{-\frac{m-2}{2}}U(\delta^{-1}z)\right]\right|_{\delta=1} &  & V_{i}(z)=\frac{\partial U}{\partial z_{i}}(z)\ i=1,\dots,m.
\end{eqnarray*}
Let us define on $M$ the functions
\[
Z_{\delta,\eta}^{i}(x)=\left\{ \begin{array}{ccc}
\chi(\exp_{\xi_{0}}^{-1}(x))\delta^{-\frac{m-2}{2}}V_{i}(\delta^{-1}\exp_{\xi_{0}}^{-1}(x)-\eta) &  & \text{if }x\in B_{g}(\xi_{0},r)\\
0 &  & \text{if }x\in M\smallsetminus B_{g}(\xi,r)
\end{array}\right..
\]
Let us introduce the spaces $K_{\delta,\eta}=\text{span}\left\langle Z_{\delta,\eta}^{0},\dots,Z_{\delta,\eta}^{m}\right\rangle $
and
\[
K_{\delta,\eta}^{\bot}=\left\{ \Phi\in H_\varepsilon\ :\ \left\langle \Phi,Z_{\delta,\eta}^{i}\right\rangle _{H}=0\text{ for }i=1,\dots,m\right\} .
\]
In order to solve problem (\ref{eq:1.4}) we will solve the couple
of equations
\begin{equation}
\pi_{\delta,\eta}\left(W_{\delta,\xi}+\Phi-i_{H}^{*}(af_{\varepsilon}(W_{\delta,\eta}+\Phi))\right)=0\label{eq:1.6}
\end{equation}
\begin{equation}
\pi_{\delta,\eta}^{\bot}\left(W_{\delta,\eta}+\Phi-i_{H}^{*}(af_{\varepsilon}(W_{\delta,\eta}+\Phi))\right)=0\label{eq:1.7}
\end{equation}
where $\pi_{\delta,\eta}:H_\varepsilon\rightarrow K_{\delta,\eta}$
and $\pi_{\delta,\eta}^{\bot}:H_\varepsilon\rightarrow K_{\delta,\eta}^{\bot}$
are the orthogonal projection and $\Phi\in\mathcal{H}_{\varepsilon}\cap K_{\delta,\eta}^{\bot}$.

\section{The finite dimensional reduction}

First of all, we   solve equation (\ref{eq:1.7}).
We set
\begin{align*}
L_{\varepsilon,\delta,\eta} & (\Phi):=\pi_{\delta,\eta}^{\bot}\left\{ \Phi-i_{H}^{*}\left(a(x)f_{\varepsilon}'(W_{\delta,\eta})[\Phi]\right)\right\} \\
N_{\varepsilon,\delta,\eta} & (\Phi):=\pi_{\delta,\eta}^{\bot}\left\{ i_{H}^{*}\left(a(x)\left(f_{\varepsilon}(W_{\delta,\eta}+\Phi)-f_{\varepsilon}(W_{\delta,\eta})-f_{\varepsilon}'(W_{\delta,\eta})[\Phi]\right)\right)\right\} \\
R_{\varepsilon,\delta,\eta} & :=\pi_{\delta,\eta}^{\bot}\left\{ i_{H}^{*}\left(a(x)f_{\varepsilon}(W_{\delta,\eta})\right)-W_{\delta,\eta}\right\}
\end{align*}
so equation (\ref{eq:1.7}) reads as
\begin{equation}
L_{\varepsilon,\delta,\eta}(\Phi)=N_{\varepsilon,\delta,\eta}(\Phi)+R_{\varepsilon,\delta,\eta}\label{eq:1.8}
\end{equation}

\begin{lem}
For any real numbers $\alpha$ and $\beta$ with $0<\alpha<\beta$,
there exists a positive constant $C_{\alpha,\beta}$ such that, for
$\varepsilon$ small, for any $\eta\in\mathbb{R}^{m}$, any real number
$t\in[\alpha,\beta]$ and any $\Phi\in\mathcal{H}_{\varepsilon}\cap K_{\delta_{\varepsilon}(t),\eta}^{\bot}$
it holds
\begin{equation}
\|L_{\varepsilon,\delta_{\varepsilon}(t),\eta}(\Phi)\|_{H,s_{\varepsilon}}\ge C_{\alpha,\beta}\|\Phi\|_{H,s_{\varepsilon}}\label{eq:1.10}
\end{equation}
\end{lem}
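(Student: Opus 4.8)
The plan is to argue by contradiction, following the standard Lyapunov--Schmidt linear theory as in \cite{MPV}. If \eqref{eq:1.10} were false we would have sequences $\varepsilon_n\to 0$, $t_n\in[\alpha,\beta]$, $\eta_n\in\mathbb R^m$ and $\Phi_n\in\mathcal H_{\varepsilon_n}\cap K^{\bot}_{\delta_n,\eta_n}$, with $\delta_n:=\delta_{\varepsilon_n}(t_n)$, such that $\|\Phi_n\|_{H,s_{\varepsilon_n}}=1$ while $W_n:=L_{\varepsilon_n,\delta_n,\eta_n}(\Phi_n)$ satisfies $\|W_n\|_{H,s_{\varepsilon_n}}\to 0$. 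Since $\Phi_n\in K^{\bot}_{\delta_n,\eta_n}$, unwinding the definitions of $L_{\varepsilon,\delta,\eta}$ and of the projections $\pi_{\delta,\eta},\pi^{\bot}_{\delta,\eta}$ produces real numbers $c^n_0,\dots,c^n_m$ with
\[
\Phi_n-i_H^{*}\!\bigl(a\,f'_{\varepsilon_n}(W_{\delta_n,\eta_n})[\Phi_n]\bigr)=W_n+\sum_{i=0}^m c^n_i\,Z^{i}_{\delta_n,\eta_n},
\]
and, up to a subsequence, $t_n\to t_0\in[\alpha,\beta]$.

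The core of the argument is a blow-up analysis. I would set $\widetilde\Phi_n(z):=\delta_n^{(m-2)/2}\Phi_n\bigl(\exp_{\xi_0}(\delta_n(z+\eta_n))\bigr)$ wherever this is defined, extended by $0$ elsewhere; from $\|\Phi_n\|_H\le1$ it follows that $\{\widetilde\Phi_n\}$ is bounded in $D^{1,2}(\mathbb R^m)$, so along a subsequence $\widetilde\Phi_n\rightharpoonup\widetilde\Phi$ weakly in $D^{1,2}(\mathbb R^m)$ and strongly in $L^q_{loc}(\mathbb R^m)$ for $q<2^*_m$. Passing the orthogonality conditions $\langle\Phi_n,Z^{i}_{\delta_n,\eta_n}\rangle_H=0$ to the limit (the zero-order piece $\int_M a\,h\,\Phi_n Z^{i}_{\delta_n,\eta_n}\,d\mu_g$ carries an extra $\delta_n^2$ and is negligible) gives $\int_{\mathbb R^m}\nabla\widetilde\Phi\cdot\nabla V_i\,dz=0$ for every $i=0,\dots,m$. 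Testing the displayed identity against $Z^{j}_{\delta_n,\eta_n}$ and using these limits together with the standard asymptotics $\langle Z^{i}_{\delta_n,\eta_n},Z^{j}_{\delta_n,\eta_n}\rangle_H\to C_i$ if $i=j$ and $\to0$ otherwise (with $C_i>0$), $\|W_n\|_H\to0$, $\|Z^{i}_{\delta_n,\eta_n}\|_H\le C$, and the equation $-\Delta V_j=(2^*_m-1)U^{2^*_m-2}V_j$, one sees that $(c^n_0,\dots,c^n_m)$ solves a linear system with uniformly invertible matrix and $o(1)$ right-hand side, hence $c^n_i\to0$. Finally, testing the identity against $\psi_n$, where $\psi_n(x):=\delta_n^{-(m-2)/2}\varphi\bigl(\delta_n^{-1}\exp_{\xi_0}^{-1}(x)-\eta_n\bigr)$ with $\varphi\in C^\infty_c(\mathbb R^m)$, and letting $n\to\infty$ --- in the blown-up coordinates $g$ tends to the Euclidean metric, $a$ to a positive constant, $i_H^{*}$ acts like the inverse of $-\Delta$, and the rescaled density $a\,f'_{\varepsilon_n}(W_{\delta_n,\eta_n})$ tends to $(2^*_m-1)U^{2^*_m-2}$ --- one obtains that $\widetilde\Phi$ is a weak solution of $-\Delta\widetilde\Phi=(2^*_m-1)U^{2^*_m-2}\widetilde\Phi$ on $\mathbb R^m$. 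By the nondegeneracy of the bubble, $\widetilde\Phi$ is a linear combination of $V_0,\dots,V_m$, and the orthogonality relations above force $\widetilde\Phi\equiv0$.

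It then remains to contradict $\|\Phi_n\|_{H,s_{\varepsilon_n}}=1$. From $\Phi_n=i_H^{*}\bigl(a\,f'_{\varepsilon_n}(W_{\delta_n,\eta_n})[\Phi_n]\bigr)+W_n+\sum_i c^n_i Z^{i}_{\delta_n,\eta_n}$, and since $\|W_n\|_{H,s_{\varepsilon_n}}+\sum_i|c^n_i|\,\|Z^{i}_{\delta_n,\eta_n}\|_{H,s_{\varepsilon_n}}\to0$, only the first term needs an estimate. By \eqref{eq1.4a} it is controlled, in $\|\cdot\|_{H,s_{\varepsilon_n}}$, by $c\,\bigl|a\,f'_{\varepsilon_n}(W_{\delta_n,\eta_n})[\Phi_n]\bigr|_{\frac{m s_{\varepsilon_n}}{m+2s_{\varepsilon_n}}}$; writing $f'_{\varepsilon_n}(W_{\delta_n,\eta_n})[\Phi_n]=(2^*_m-1-\varepsilon_n)\,W_{\delta_n,\eta_n}^{2^*_m-2-\varepsilon_n}\Phi_n$, applying Hölder's inequality with the $\varepsilon_n$-dependent exponents, changing variables $z\mapsto\exp_{\xi_0}(\delta_n(z+\eta_n))$, and splitting $\mathbb R^m=\{|z|\le R\}\cup\{|z|>R\}$, one bounds this by $o(1)\,\|\Phi_n\|_{H,s_{\varepsilon_n}}+C\bigl(\int_{|z|>R}U^{2^*_m}\,dz\bigr)^{\theta}+o_R(1)$ for some $\theta>0$, where the $o(1)$ uses $\widetilde\Phi_n\to0$ in $L^q_{loc}(\mathbb R^m)$. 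Letting $n\to\infty$ and then $R\to\infty$ gives $\|\Phi_n\|_{H,s_{\varepsilon_n}}\to0$, the desired contradiction, and \eqref{eq:1.10} follows.

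I expect the main obstacle to be this final step in the genuinely supercritical regime $\varepsilon_n<0$, where $s_{\varepsilon_n}=2^*_m-\tfrac m2\varepsilon_n>2^*_m$ and the extra $L^{s_{\varepsilon_n}}$-seminorm has to be propagated through \eqref{Rem:GT}--\eqref{eq1.4a} with Hölder exponents that move with $n$, keeping every constant uniform in $t_n\in[\alpha,\beta]$ and $\eta_n\in\mathbb R^m$. This relies on the choice $\delta_n=\sqrt{|\varepsilon_n|t_n}$, which makes $\delta_n^{\varepsilon_n}\to1$ and so keeps the $L^{s_{\varepsilon_n}}$-norms of $W_{\delta_n,\eta_n}$ and $Z^{i}_{\delta_n,\eta_n}$ bounded, together with careful bookkeeping of the volume element $\sqrt{\det g}$ in geodesic normal coordinates and of the decay of $U$, exactly as in \cite{MPV}. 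In the subcritical case $\varepsilon_n>0$ the scheme simplifies considerably, since then $\mathcal H_{\varepsilon_n}=H$ and one recovers the classical Hilbert-space argument.
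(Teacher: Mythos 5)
Your argument is correct and is essentially the standard one: the paper itself does not reproduce a proof but simply refers to Lemma~3.1 of \cite{MPV}, whose proof is exactly the contradiction/blow-up scheme you describe (rescaling, weak limit solving the linearized equation, nondegeneracy of the bubble, then propagating the smallness through $i_H^{*}$ and the $L^{s_{\varepsilon}}$-norm in the supercritical case). So your proposal matches the intended proof, with no genuine gap beyond the technical bookkeeping you already flag.
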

\begin{proof}
The proof is the same of {[}\cite{MPV} Lemma 3.1{]} which we refers to\end{proof}
\begin{lem}
\label{lem:3.2}If $m\ge9$, for any real numbers $\alpha$ and $\beta$
with $0<\alpha<\beta$, there exists a positive constant $C_{\alpha,\beta}$
such that, for $\varepsilon$ small enough, for any $\eta\in\mathbb{R}^{m}$,
any real number $t\in[\alpha,\beta]$
\[
\|R_{\varepsilon,\delta_{\varepsilon}(t),\eta}\|_{H,s_{\varepsilon}}\le C_{\alpha,\beta}|\varepsilon|\left|\log|\varepsilon|\right|
\]
\end{lem}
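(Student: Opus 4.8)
The plan is to estimate $R_{\varepsilon,\delta,\eta}$ by going back to its definition and using the continuity of $i_H^*$ together with the inequality \eqref{eq1.4a}. Since $\pi_{\delta,\eta}^\bot$ is a bounded projection and $i_H^*(a f_\varepsilon(W_{\delta,\eta})) - W_{\delta,\eta} = i_H^*\big(a f_\varepsilon(W_{\delta,\eta}) - (-\Delta_g + \cdot)\text{-pre-image of }W_{\delta,\eta}\big)$, it suffices to control $\big| a f_\varepsilon(W_{\delta,\eta}) - a h W_{\delta,\eta} + \mathrm{div}_g(a\nabla_g W_{\delta,\eta}) \big|$ in the norm $L^{\frac{m s_\varepsilon}{m+2 s_\varepsilon}}$. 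First I would split the difference according to the support of $W_{\delta,\eta}$: inside the normal ball $B_g(\xi_0,r)$ one writes everything in geodesic coordinates, so that the metric $g$ is a perturbation of the Euclidean metric of size $O(|z|^2)$, while outside the ball the contributions are exponentially small in $\delta$ and hence negligible. The main term to dissect is the difference between the flat equation $-\Delta U_{\delta,\eta} = U_{\delta,\eta}^{2^*_m-1}$, which the bubble solves exactly, and the manifold equation, picking up (a) the metric error $(\Delta_g - \Delta_{\text{eucl}})W_{\delta,\eta}$, (b) the lower order term $a h W_{\delta,\eta}$, (c) the gradient term coming from $\nabla_g a$, (d) the cut-off error from $\chi$, and (e) the exponent perturbation $U_{\delta,\eta}^{2^*_m-1} - U_{\delta,\eta}^{2^*_m-1-\varepsilon}$.

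Next I would estimate each piece. Terms (a)--(d) are the standard "geometric error" contributions and, by the usual bubble integral computations (using $m \ge 9$, which guarantees the relevant integrals of powers of $U$ converge and that $\delta^{\frac{m-2}{2}}$-type weights behave well), contribute at most $O(\delta^2) = O(|\varepsilon| t)$ in the appropriate norm — in fact better than the claimed bound. The genuinely delicate term is (e): writing $U^{2^*_m-1-\varepsilon} = U^{2^*_m-1} e^{-\varepsilon \log U}$ and expanding, the leading contribution is $-\varepsilon\, U^{2^*_m-1}\log U$. Here one must compute $\big| U_{\delta,\eta}^{2^*_m-1}\log U_{\delta,\eta} \big|$ in the norm $L^{\frac{m s_\varepsilon}{m+2 s_\varepsilon}}$; after rescaling $z \mapsto \delta z$ the $\log U_{\delta,\eta}$ produces a $\log\delta = \tfrac12\log(|\varepsilon| t)$ factor, which is exactly where the $|\log|\varepsilon||$ in the statement comes from, and the prefactor $\varepsilon$ gives the $|\varepsilon|$ factor. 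One has to be a little careful in the supercritical case $\varepsilon<0$, where the norm exponent $s_\varepsilon = 2^*_m - \tfrac{m}{2}\varepsilon$ itself depends on $\varepsilon$; but since $s_\varepsilon \to 2^*_m$ the exponents stay in a compact range and the constants can be taken uniform in $t\in[\alpha,\beta]$ and in small $\varepsilon$.

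I expect the main obstacle to be the bookkeeping in term (e): one needs the expansion $f_\varepsilon(W_{\delta,\eta}) = W_{\delta,\eta}^{2^*_m-1} - \varepsilon W_{\delta,\eta}^{2^*_m-1}\log W_{\delta,\eta} + O(\varepsilon^2 \cdot \text{something integrable})$ to be valid \emph{uniformly} — the remainder involves $(\log W_{\delta,\eta})^2$ weighted by $W_{\delta,\eta}^{2^*_m-1}$, and one must check that after rescaling this produces only $O(\varepsilon^2 (\log\delta)^2) = o(|\varepsilon||\log|\varepsilon||)$, so that it is absorbed. The only real subtlety is that near $z = \eta$ (i.e. the peak) $\log U$ is bounded, while far out $\log U \sim -(m-2)\log|z|$ grows slowly, so all the integrals $\int (1+|z|^2)^{-\frac{m+2}{2}\cdot\frac{ms_\varepsilon}{m+2s_\varepsilon}} (\log(1+|z|^2))^j\, dz$ converge precisely because $m\ge 9$ pushes the exponent safely above $m$. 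Once these integrals are seen to be finite and continuous in the parameters, collecting the estimates gives $\|R_{\varepsilon,\delta_\varepsilon(t),\eta}\|_{H,s_\varepsilon} \le C_{\alpha,\beta}|\varepsilon||\log|\varepsilon||$, as claimed. This is entirely parallel to the corresponding estimate in \cite{MPV}, the only new feature being the presence of the weight $a(x)$ and its gradient, which only affect the lower-order terms (a)--(d).
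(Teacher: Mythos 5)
Your overall strategy is the right one and matches the paper's: reduce, via the continuity of $i_H^*$ in \eqref{eq1.4a}, to an $L^{q}$ estimate with $q=\frac{ms_\varepsilon}{m+2s_\varepsilon}$ of
$a f_\varepsilon(W_{\delta,\eta})+\mathrm{div}_g(a\nabla_g W_{\delta,\eta})-ahW_{\delta,\eta}$, observe that the isotropic part $f_\varepsilon(W_{\delta,\eta})+\Delta_g W_{\delta,\eta}-hW_{\delta,\eta}$ is exactly the quantity already estimated in \cite{MPV} (your terms (a), (b), (d), (e), with (e) indeed the source of the factor $|\varepsilon|\,|\log|\varepsilon||$ through $\log W_{\delta,\eta}\sim -\tfrac{m-2}{2}\log\delta$), and then handle the extra anisotropic term $\nabla_g a\cdot\nabla_g W_{\delta,\eta}$.

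However, there is a genuine gap precisely at that extra term, which is the only really new estimate in this lemma. You classify (c) among the ``standard geometric error'' contributions and assert it is $O(\delta^2)$ ``by the usual bubble integral computations'', adding at the end that the weight $a$ ``only affects the lower-order terms''. This is false for a generic $a$: since $\|\nabla_g W_{\delta,\eta}\|_{q}\sim \delta^{\,1-\frac m2+\frac mq}=\delta$ (with $q=\frac{2m}{m+2}$ when $\varepsilon>0$), one only gets $\|\nabla_g a\cdot\nabla_g W_{\delta,\eta}\|_{q}=O(\delta)=O(|\varepsilon|^{1/2})$, which is much larger than $|\varepsilon|\,|\log|\varepsilon||$ and would destroy the claimed bound. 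The missing idea, and the one the paper's proof hinges on, is that the bubble is centred at $\xi_0$, a critical point of $a$, so that in geodesic coordinates $|\nabla_g a(y)|\le C|y|$; the extra factor $|y|=\delta|z+\eta|$ upgrades the estimate to
$\|\nabla_g a\cdot\nabla_g W_{\delta,\eta}\|_{q}\le C\bigl(\delta^{\frac{m-2}{2}}+\delta^{\,1+\frac mq-\frac m2}\bigr)\le C\delta^{2}=C|\varepsilon|t$,
which is where $m\ge9$ and the value of $q$ actually enter in this lemma (your explanation of the role of $m\ge 9$, as ensuring convergence of the $\log$-weighted bubble integrals, is not the relevant point here). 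Without invoking the critical-point hypothesis on $a$ at $\xi_0$, your step (c) fails and the lemma's bound cannot be reached.
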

\begin{proof}
By definition of $i_{H}^{*}$ and  \eqref{eq1.4a}
\[
\|R_{\varepsilon,\delta_{\varepsilon}(t),\eta}\|_{H,s_{\varepsilon}}\le c\|a(x)f_{\varepsilon}(W_{\delta_{\varepsilon}(t),\eta})+a(x)\Delta_{g}W_{\delta_{\varepsilon}(t),\eta}+\nabla_{g}a(x)\nabla_{g}W_{\delta_{\varepsilon}(t),\eta}-a(x)hW_{\delta_{\varepsilon}(t),\eta}\|_{\frac{ms_{\varepsilon}}{m+2s_{\varepsilon}}}
\]
Using {[}\cite{MPV}, Lemma 3.2{]}, by direct computation it is easy to prove
that
\[
\|f_{\varepsilon}(W_{\delta_{\varepsilon}(t),\eta})+\Delta_{g}W_{\delta_{\varepsilon}(t),\eta}-hW_{\delta_{\varepsilon}(t),\eta}\|_{\frac{ms_{\varepsilon}}{m+2s_{\varepsilon}}}\le C_{\alpha,\beta}|\varepsilon|\left|\log|\varepsilon|\right|.
\]
It remains to estimate the term
\[
\|\nabla_{g}a\nabla_{g}W_{\delta_{\varepsilon}(t),\eta}\|_{\frac{ms_{\varepsilon}}{m+2s_{\varepsilon}}}.
\]
Since $\xi_{0}$ is a critical point for the function $a$ we have
\begin{align*}
\|\nabla_{g}a\nabla_{g}W_{\delta_{\varepsilon}(t),\eta}\|_{q}^{q} & \le C\max_{1\le j\le m}\int_{|y|<r}|y|^{q}\left|\frac{\partial}{\partial y_{j}}\left(\chi(y)\delta^{-\frac{m-2}{2}}U\left(\frac{y}{\delta}-\eta\right)\right)\right|^{q}|g(y)|^{\frac{1}{2}}dy\\
 & \le C\max_{1\le j\le m}\int_{|y|<r}|y|^{q}\left|\delta^{-\frac{m-2}{2}}\frac{\partial}{\partial y_{j}}\chi(y)U\left(\frac{y}{\delta}-\eta\right)\right|^{q}dy\\
& + C\max_{1\le j\le m}\int_{|y|<r}|y|^{q}\left|\delta^{-\frac{m-2}{2}}\chi(y)\frac{\partial}{\partial y_{j}}U\left(\frac{y}{\delta}-\eta\right)\right|^{q}dy\\
 & \le C\max_{1\le j\le m}\int_{{r\over2\delta}\le |z|\le{r\over\delta}}|\delta z|^{q}\left|\delta^{-\frac{m-2}{2}}U\left(z-\eta\right)\right|^{q}\delta^{m}dz\\
 & + C\max_{1\le j\le m}\int_{|z|\le{r\over\delta}}|\delta z|^{q}\left|\delta^{-\frac{m}{2}}\frac{\partial U}{\partial z_{j}}\left(z-\eta\right)\right|^{q}\delta^{m}dz\\
 & \le C\delta^{\frac{m-2}{2} q}+C\delta^{m+q -\frac{m}{2} q}.
\end{align*}
Therefore,
\begin{equation}
\|\nabla_{g}a\nabla_{g}W_{\delta_{\varepsilon}(t),\eta}\|_{q}\le\delta^{2},\label{eq:4.3}
\end{equation}
because $m\ge9$ and if $q=\frac{ms_{\varepsilon}}{m+2s_{\varepsilon}}$ then $1-\frac{m}{2}+\frac{m}{q}=2$
if $\varepsilon>0$ or $1-\frac{m}{2}+\frac{m}{q}$ close to 2 if
$\varepsilon<0$ and sufficiently small. This concludes the proof.\end{proof}
\begin{prop}
\label{prop:Phi}If $m\ge9$, for any real numbers $\alpha$ and $\beta$
with $0<\alpha<\beta$, there exists a positive constant $C_{\alpha,\beta}$
such that, for $\varepsilon$ small enough, for any $\eta\in\mathbb{R}^{m}$,
any real number $t\in[\alpha,\beta]$, there exists a unique solution $\Phi_{\delta_{\varepsilon}(t),\eta}\in {H}_{\varepsilon}\cap K_{\delta_{\varepsilon}(t),\eta}^{\bot}$
of equation (\ref{eq:1.7}) such that
\[
\|\Phi_{\varepsilon,\delta_{\varepsilon}(t),\eta}\|_{H,s_{\varepsilon}}\le C_{\alpha,\beta}|\varepsilon|\left|\log|\varepsilon|\right|.
\]
Moreover $\Phi_{\varepsilon,\delta_{\varepsilon}(t),\eta}$ is continuously
differentiable with respect to $t$ and $\eta$.\end{prop}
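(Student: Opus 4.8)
The plan is to solve \eqref{eq:1.7}, equivalently \eqref{eq:1.8}, by a contraction argument, and then to get the $C^1$ dependence on $(t,\eta)$ from the implicit function theorem. Throughout, fix $t\in[\alpha,\beta]$ and $\eta\in\mathbb R^m$, abbreviate $\delta=\delta_\varepsilon(t)$, $L=L_{\varepsilon,\delta,\eta}$, $N=N_{\varepsilon,\delta,\eta}$, $R=R_{\varepsilon,\delta,\eta}$, and set $\rho_\varepsilon:=C_{\alpha,\beta}|\varepsilon|\,|\log|\varepsilon||$ with $C_{\alpha,\beta}$ to be chosen large. By the invertibility estimate \eqref{eq:1.10}, together with the fact — proved exactly as in \cite{MPV} — that $L$ is the sum of the identity and a compact operator on $K^\bot_{\delta,\eta}$, $L$ is an isomorphism of $K^\bot_{\delta,\eta}$ onto itself with $\|L^{-1}\|\le C_{\alpha,\beta}$, uniformly for $t\in[\alpha,\beta]$, $\eta\in\mathbb R^m$ and $\varepsilon$ small. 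Hence \eqref{eq:1.8} is equivalent to the fixed point problem $\Phi=\mathcal T(\Phi):=L^{-1}(N(\Phi)+R)$, and it suffices to show that $\mathcal T$ maps the ball $\mathcal B_{\rho_\varepsilon}:=\{\Phi\in H_\varepsilon\cap K^\bot_{\delta,\eta}\colon\|\Phi\|_{H,s_\varepsilon}\le\rho_\varepsilon\}$ into itself and is a contraction there.

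The core of the argument is the estimate of the nonlinear remainder $N$. Starting from the definition of $i_H^*$ and \eqref{eq1.4a}, which reduce matters to the Lebesgue norm $|\cdot|_{\frac{ms_\varepsilon}{m+2s_\varepsilon}}$, and using the elementary inequalities for $f_\varepsilon(u)=(u^+)^{2^*_m-1-\varepsilon}$ — whose derivative $f'_\varepsilon(u)=(2^*_m-1-\varepsilon)(u^+)^{2^*_m-2-\varepsilon}$ is merely H\"older continuous of exponent $\sigma_\varepsilon:=2^*_m-2-\varepsilon\in(0,1)$, since $m\ge9$ forces $2^*_m-2=\tfrac{4}{m-2}<1$ — one obtains, for $\Phi,\Phi_1,\Phi_2\in\mathcal B_{\rho_\varepsilon}$,
\[
\|N(\Phi)\|_{H,s_\varepsilon}\le c\,\|\Phi\|_{H,s_\varepsilon}^{1+\sigma_\varepsilon},\qquad \|N(\Phi_1)-N(\Phi_2)\|_{H,s_\varepsilon}\le c\bigl(\|\Phi_1\|_{H,s_\varepsilon}^{\sigma_\varepsilon}+\|\Phi_2\|_{H,s_\varepsilon}^{\sigma_\varepsilon}\bigr)\,\|\Phi_1-\Phi_2\|_{H,s_\varepsilon}.
\]
In the subcritical regime $\varepsilon>0$ we have $H_\varepsilon=H$ and these reduce to the standard estimates; in the supercritical regime $\varepsilon<0$ one must keep track of the $L^{s_\varepsilon}$-component of $\|\cdot\|_{H,s_\varepsilon}$, and this is precisely where the choice $s_\varepsilon=2^*_m-\tfrac m2\varepsilon$ and the inequality \eqref{Rem:GT} are used, so that the Lebesgue exponents close up. Combining the two bounds above with Lemma \ref{lem:3.2}, with $\|L^{-1}\|\le C_{\alpha,\beta}$, and with $\rho_\varepsilon\to0$ (whence $c\,\rho_\varepsilon^{\sigma_\varepsilon}\to0$), we get for $\varepsilon$ small that $\|\mathcal T(\Phi)\|_{H,s_\varepsilon}\le C_{\alpha,\beta}\bigl(c\,\rho_\varepsilon^{1+\sigma_\varepsilon}+C_{\alpha,\beta}|\varepsilon|\,|\log|\varepsilon||\bigr)\le\rho_\varepsilon$ and $\|\mathcal T(\Phi_1)-\mathcal T(\Phi_2)\|_{H,s_\varepsilon}\le 2C_{\alpha,\beta}\,c\,\rho_\varepsilon^{\sigma_\varepsilon}\,\|\Phi_1-\Phi_2\|_{H,s_\varepsilon}\le\tfrac12\|\Phi_1-\Phi_2\|_{H,s_\varepsilon}$. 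Banach's fixed point theorem then produces a unique $\Phi_{\varepsilon,\delta_\varepsilon(t),\eta}\in\mathcal B_{\rho_\varepsilon}$ solving \eqref{eq:1.7}, and the membership $\Phi_{\varepsilon,\delta_\varepsilon(t),\eta}\in\mathcal B_{\rho_\varepsilon}$ is exactly the claimed bound $\|\Phi_{\varepsilon,\delta_\varepsilon(t),\eta}\|_{H,s_\varepsilon}\le C_{\alpha,\beta}|\varepsilon|\,|\log|\varepsilon||$.

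For the regularity, I would apply the implicit function theorem to the $C^1$ map $\mathcal G(\varepsilon,t,\eta,\Phi):=\pi^\bot_{\delta_\varepsilon(t),\eta}\bigl(W_{\delta_\varepsilon(t),\eta}+\Phi-i_H^*(a f_\varepsilon(W_{\delta_\varepsilon(t),\eta}+\Phi))\bigr)$, defined near the point $(\varepsilon,t,\eta,\Phi_{\varepsilon,\delta_\varepsilon(t),\eta})$. Its smoothness in $(t,\eta)$ follows from the smoothness of $\exp_{\xi_0}$, $\chi$, $U$, $a$, and of the projections, while the Fr\'echet differentiability of $u\mapsto i_H^*(a f_\varepsilon(u))$ between the relevant spaces is a consequence of the H\"older bound on $f'_\varepsilon$ recorded above (the nonlinearity need not be of class $C^2$ for this). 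The partial differential $\partial_\Phi\mathcal G$ at the solution equals $L_{\varepsilon,\delta_\varepsilon(t),\eta}$ up to a correction of operator norm $O(\rho_\varepsilon^{\sigma_\varepsilon})$, hence is invertible for $\varepsilon$ small thanks to \eqref{eq:1.10}; the implicit function theorem then shows that $(t,\eta)\mapsto\Phi_{\varepsilon,\delta_\varepsilon(t),\eta}$ is of class $C^1$. I expect the estimate of $N$ in the norm $\|\cdot\|_{H,s_\varepsilon}$ to be the main obstacle: because $m\ge9$ places us in the ``small power'' regime $2^*_m-2<1$, the nonlinearity is not $C^2$ and its derivative is only $\sigma_\varepsilon$-H\"older, so all contraction and differentiability estimates must be run with the exponent $\sigma_\varepsilon$ rather than with Lipschitz bounds; and in the supercritical case the interpolation exponents produced by \eqref{Rem:GT} have to be matched carefully with the definition of $s_\varepsilon$ in order that every Lebesgue norm that appears indeed closes up.
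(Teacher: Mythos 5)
Your proposal is correct and follows essentially the same route as the paper: a contraction mapping argument for $T=L^{-1}(N+R)$ on the ball of radius $C_{\alpha,\beta}|\varepsilon|\,|\log|\varepsilon||$, using the invertibility estimate \eqref{eq:1.10}, the remainder bound of Lemma \ref{lem:3.2}, superlinear smallness of $N$ in the $\|\cdot\|_{H,s_\varepsilon}$ norm with the exponents matched via \eqref{Rem:GT} and the choice of $s_\varepsilon$, and the implicit function theorem for the $C^1$ dependence on $(t,\eta)$. The only cosmetic difference is that you bound $N$ through the $\sigma_\varepsilon$-H\"older continuity of $f'_\varepsilon$ (giving $\|\Phi\|^{1+\sigma_\varepsilon}$), whereas the paper uses a mean-value inequality producing $\|\Phi\|^{2}$ times a bounded factor involving $W_{\delta,\eta}$; both yield the same conclusion.
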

\begin{proof}
In order to solve equation (\ref{eq:1.7}) we look for a fixed point
for the operator
\[
T_{\varepsilon,\delta_{\varepsilon}(t),\eta}=T: {H}_{\varepsilon}\cap K_{\delta_{\varepsilon}(t),\eta}^{\bot}\rightarrow {H}_{\varepsilon}\cap K_{\delta_{\varepsilon}(t),\eta}^{\bot}
\]
defined by
\[
T_{\varepsilon,\delta_{\varepsilon}(t),\eta}(\Phi)=L_{\varepsilon,\delta_{\varepsilon}(t),\eta}^{-1}\left\{ N_{\varepsilon,\delta_{\varepsilon}(t),\eta}(\Phi)+R_{\varepsilon,\delta_{\varepsilon}(t),\eta}\right\} .
\]
We have that
\begin{align*}
\|T_{\varepsilon,\delta_{\varepsilon}(t),\eta}(\Phi)\|_{H,s_{\varepsilon}} & \le\|N_{\varepsilon,\delta_{\varepsilon}(t),\eta}(\Phi)\|_{H,s_{\varepsilon}}+\|R_{\varepsilon,\delta_{\varepsilon}(t),\eta}\|_{H,s_{\varepsilon}}\\
\|T_{\varepsilon,\delta_{\varepsilon}(t),\eta}(\Phi_{1})-T_{\varepsilon,\delta_{\varepsilon}(t),\eta}(\Phi_{2})\|_{H,s_{\varepsilon}} & \le\|N_{\varepsilon,\delta_{\varepsilon}(t),\eta}(\Phi_{1})-N_{\varepsilon,\delta_{\varepsilon}(t),\eta}(\Phi_{2})\|_{H,s_{\varepsilon}}.
\end{align*}
Since $m\ge9$ a simple application of mean value theorem gives
\[
|f_{\varepsilon}(x+y)-f_{\varepsilon}(x+z)-f'_{\varepsilon}(x)(y-z)|\le C|y-z|\left[|y|+|z|\right]\left[x+|y|+|z|\right]^{2_{m}^{*}-3-\varepsilon}
\]
 for all $x>0,$ $x,y,z\in\mathbb{R}.$ Thus
\begin{align}
&\|N_{\varepsilon,\delta_{\varepsilon}(t),\eta}(\Phi_{1})-
N_{\varepsilon,\delta_{\varepsilon}(t),\eta}(\Phi_{2})\|_{H,s_{\varepsilon}}\nonumber\\ &
\le\|f_{\varepsilon}(W_{\delta_{\varepsilon}(t),\eta}+\Phi_{1})-f_{\varepsilon}(W_{\delta_{\varepsilon}(t),\eta}+
\Phi_{2})-f'_{\varepsilon}(W_{\delta_{\varepsilon}(t),\eta})(\Phi_{1}-\Phi_{2})\|_{\frac{ms_{\varepsilon}}
{m+2s_{\varepsilon}}}\nonumber \\ &
\le C\|\Phi_{1}-\Phi_{2}\|_{\beta_{\varepsilon}}\left[\|\Phi_{1}\|_{\beta_{\varepsilon}}
+\|\Phi_{2}\|_{\beta_{\varepsilon}}\right]
\times\left[\|W_{\delta_{\varepsilon}(t),\eta}\|_{\beta_{\varepsilon}}+\|\Phi_{1}\|_{\beta_{\varepsilon}}+\|\Phi_{2}\|_{\beta_{\varepsilon}}\right]^{2_{m}^{*}-3-\varepsilon}\label{eq:3.1}
\end{align}
where $\beta_{\varepsilon}=\frac{ms_{\varepsilon}}{m+2s_{\varepsilon}}(2^{*}-1-\varepsilon)$,
so $\beta_{\varepsilon}=s_{\varepsilon}$ if $\varepsilon<0$ or $\beta_{\varepsilon}=2_{m}^{*}-\varepsilon\frac{2m}{m+2}$
if $\varepsilon>0$.

In particular

\begin{align}
\|N_{\varepsilon,\delta_{\varepsilon}(t),\eta}(\Phi_{1})\|_{H,s_{\varepsilon}}&
\le\|f_{\varepsilon}(W_{\delta_{\varepsilon}(t),\eta}+\Phi_{1})-f_{\varepsilon}(W_{\delta_{\varepsilon}(t),\eta})
-f'_{\varepsilon}(W_{\delta_{\varepsilon}(t),\eta})(\Phi_{1})\|_{\frac{ms_{\varepsilon}}{m+2s_{\varepsilon}}}\nonumber\\
&
\le C\|\Phi_{1}\|_{\beta_{\varepsilon}}^{2}\left[\|W_{\delta_{\varepsilon}(t),\eta}\|_{\beta_{\varepsilon}}
+\|\Phi_{1}\|_{\beta_{\varepsilon}}\right]^{2_{m}^{*}-3-\varepsilon}.\label{eq:3.2}
\end{align}
Thus by (\ref{eq:3.2}) and by Lemma \ref{lem:3.2} we have that
\[
\|T_{\varepsilon,\delta_{\varepsilon}(t),\eta}(\Phi)\|_{H,s_{\varepsilon}}\le\|\Phi\|_{H,s_{\varepsilon}}^{2}+C_{\alpha,\beta}|\varepsilon|\left|\log|\varepsilon|\right|,
\]
so if $\|\Phi\|_{H,s_{\varepsilon}}\le2C_{\alpha,\beta}|\varepsilon|\left|\log|\varepsilon|\right|$
and for $\varepsilon$ small $\|T_{\varepsilon,\delta_{\varepsilon}(t),\eta}(\Phi)\|_{H,s_{\varepsilon}}\le2C_{\alpha,\beta}|\varepsilon|\left|\log|\varepsilon|\right|$.
Moreover by (\ref{eq:3.1}) we have
\[
\|T_{\varepsilon,\delta_{\varepsilon}(t),\eta}(\Phi_{1})-T_{\varepsilon,\delta_{\varepsilon}(t),\eta}(\Phi_{2})\|_{H,s_{\varepsilon}}\le K\|\Phi_{1}-\Phi_{2}\|_{H,s_{\varepsilon}}
\]
 for some $K<1$ if $\|\Phi_{i}\|_{H,s_{\varepsilon}}\le2C_{\alpha,\beta}|\varepsilon|\left|\log|\varepsilon|\right|$
and $\varepsilon$ small enough.

Therefore, a contraction mapping argument proves that the map $T_{\varepsilon,\delta_{\varepsilon}(t),\eta}$
admits a fixed point $\Phi_{\delta_{\varepsilon}(t),\eta}$. The regularity
of $\Phi_{\delta_{\varepsilon}(t),\eta}$ with respect to $\eta$
and $t$ follows by standard arguments using the implicit function
theorem.
\end{proof}

\section{The reduced problem and proof of Theorem \ref{main} }

Let $J_{\varepsilon}:H_\varepsilon\rightarrow\mathbb{R}$ be the energy associated to problem (\ref{equ5}) defined
by
\begin{equation}
J_{\varepsilon}(u)=\frac{1}{2}\int_{M}a(x)\left(|\nabla_{g}u|^{2}+h(x)u^{2}\right)d\mu_{g}-\frac{1}{2_{m}^{*}-\varepsilon}\int_{M}a(x)\left(u^{+}\right)^{2_{m}^{*}-\varepsilon}d\mu_{g}.\label{eq:Je}
\end{equation}
 It is well known that any critical point of
$J_{\varepsilon}$ is a solution to problem (\ref{equ5}).

Let us introduce the reduced energy
\[
\tilde{J}_{\varepsilon}(t,\eta):=J_{\varepsilon}\left(W_{\delta_{\varepsilon}(t),\eta}+\Phi_{\delta_{\varepsilon}(t),\eta}\right)
\]
where $W_{\delta_{\varepsilon}(t),\eta}$ is defined in (\ref{eq:W}),
$\delta_{\varepsilon}(t)=\sqrt{|\varepsilon|t}$ (see \eqref{delta}) and $\Phi_{\delta_{\varepsilon}(t),\eta}$
is given in Proposition \ref{prop:Phi}.

\begin{prop}
\label{lem:4.1}\begin{itemize}
\item[(i)] If $(t ,\eta )$ is a critical
point of $\tilde{J}_{\varepsilon}$, then $W_{\delta_{\varepsilon}(t ),\eta }+\Phi_{\delta_{\varepsilon}(t  ),\eta  }$
is a solution of (\ref{eq:1.6}) and then is a solution of problem
(\ref{equ5}).
\item[(ii)]
We have
\begin{align}\label{jtilde}
\tilde J_{\varepsilon}\left(t,\eta \right)=a(\xi_{0})\left[ a_{m}-b_{m}\varepsilon\log |\varepsilon| +c_{m}\varepsilon + d_m |\varepsilon|\Phi(t,\eta)\right]+o(|\varepsilon|)
\end{align}
$C^{1}-$uniformly with respect to $\eta\in\mathbb{R}^{m}$ and $t\in[\alpha,\beta].$

Here
\begin{align}\label{PHI}
\Phi(t,\eta)= &  \left\{{2(m-1)\over(m-2)(m-4)} \left[  h(\xi_{0}) -\frac{m-2}{4(m-1)}S_{g}(\xi_{0})   +{3 (m-2)\over 2(m-1)}\frac{\Delta_g  a(\xi_0)}{ a(\xi_{0})}\right]   +{1\over2} \frac{D^2_g a(\xi_0)[\eta,\eta]}{a(\xi_{0})}\right\}t\nonumber\\ &-{\varepsilon\over|\varepsilon|} {(m-2)^2\over8}\log t
\end{align}
and $a_m,\dots,d_m$ are  constants which only depend on $m$.  \end{itemize}
 \end{prop}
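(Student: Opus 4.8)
The plan is to establish the two items by the two halves of the Ljapunov--Schmidt scheme set up above. For (i), since $\Phi_{\delta_\varepsilon(t),\eta}$ solves \eqref{eq:1.7}, the element $W_{\delta_\varepsilon(t),\eta}+\Phi_{\delta_\varepsilon(t),\eta}-i_{H}^{*}\bigl(af_\varepsilon(W_{\delta_\varepsilon(t),\eta}+\Phi_{\delta_\varepsilon(t),\eta})\bigr)$ lies in $K_{\delta_\varepsilon(t),\eta}$; equivalently $J_\varepsilon'\bigl(W_{\delta_\varepsilon(t),\eta}+\Phi_{\delta_\varepsilon(t),\eta}\bigr)=\sum_{i=0}^{m}c_i\,Z^{i}_{\delta_\varepsilon(t),\eta}$ in $H$ for suitable $c_i=c_i(\varepsilon,t,\eta)$, and $W+\Phi$ solves \eqref{equ5} exactly when $c_0=\dots=c_m=0$. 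Differentiating $\tilde J_\varepsilon$ gives $\partial_{s}\tilde J_\varepsilon=\sum_i c_i\bigl\langle Z^{i}_{\delta_\varepsilon(t),\eta},\partial_{s}(W+\Phi)\bigr\rangle_H$ for $s\in\{t,\eta_1,\dots,\eta_m\}$. Now $\partial_{\eta_j}W_{\delta,\eta}=-Z^{j}_{\delta,\eta}$ and $\partial_t W_{\delta,\eta}=\tfrac1{2t}\bigl(Z^{0}_{\delta,\eta}-\sum_j\eta_j Z^{j}_{\delta,\eta}\bigr)$ exactly (the cut-off $\chi$ does not depend on $t$ or $\eta$), while $\langle Z^{i},\partial_{s}\Phi\rangle_H=-\langle\Phi,\partial_{s}Z^{i}\rangle_H$ because $\Phi\perp K_{\delta,\eta}$, and this is $o(1)$ by the bound $\|\Phi\|_{H,s_\varepsilon}\le C|\varepsilon|\,|\log|\varepsilon||$ of Proposition \ref{prop:Phi} together with $\|\partial_{\eta}Z^{i}\|_H=O(1)$, $\|\partial_\delta Z^{i}\|_H=O(\delta^{-1})$ and $\partial_t\delta=\tfrac{\delta}{2t}$. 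Hence the $(m+1)\times(m+1)$ matrix $\bigl(\langle Z^{i}_{\delta,\eta},\partial_{s}(W+\Phi)\rangle_H\bigr)$ is, for $\varepsilon$ small and $t\in[\alpha,\beta]$, a small perturbation of the product of a triangular matrix with nonzero diagonal $\bigl(\tfrac1{2t},-1,\dots,-1\bigr)$ by the (approximately diagonal, positive definite) Gram matrix of $Z^{0}_{\delta,\eta},\dots,Z^{m}_{\delta,\eta}$, hence invertible; therefore $\nabla_{(t,\eta)}\tilde J_\varepsilon=0$ forces all $c_i=0$, so $W_{\delta_\varepsilon(t),\eta}+\Phi_{\delta_\varepsilon(t),\eta}$ is a critical point of $J_\varepsilon$, i.e. a solution of \eqref{equ5}. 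This also proves the first assertion of (ii).

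For the expansion \eqref{jtilde} I would proceed in two steps. \emph{Step 1 (replacing $W+\Phi$ by $W$).} By Taylor's formula $J_\varepsilon(W+\Phi)=J_\varepsilon(W)+J_\varepsilon'(W)[\Phi]+\int_0^1(1-\sigma)J_\varepsilon''(W+\sigma\Phi)[\Phi,\Phi]\,d\sigma$; here $J_\varepsilon'(W)[\Phi]=-\langle R_{\varepsilon,\delta_\varepsilon(t),\eta},\Phi\rangle_H$ because $\Phi\perp K_{\delta_\varepsilon(t),\eta}$, so Lemma \ref{lem:3.2} and Proposition \ref{prop:Phi} give $|J_\varepsilon'(W)[\Phi]|=O\bigl((|\varepsilon|\,|\log|\varepsilon||)^2\bigr)=o(|\varepsilon|)$, and the quadratic remainder is $O(\|\Phi\|_{H,s_\varepsilon}^2)=o(|\varepsilon|)$ as well. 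Differentiating the same identity in $(t,\eta)$, $\partial_s[\tilde J_\varepsilon-J_\varepsilon(W)]$ is again $o(|\varepsilon|)$: the contribution $\int_0^1 J_\varepsilon''(W+\sigma\Phi)[\Phi,\partial_s W]\,d\sigma$ vanishes to leading order since $\langle\Phi,\partial_s W\rangle_H=0$ (as $\partial_s W\in K_{\delta,\eta}$) and $i_{H}^{*}\bigl(af_\varepsilon'(W)[\partial_s W]\bigr)=\partial_s W+O(|\varepsilon|\,|\log|\varepsilon||)$ in $H$ (approximate eigenfunction of the linearized operator), while $J_\varepsilon'(W+\Phi)[\partial_s\Phi]=\sum_i c_i\langle Z^{i},\partial_s\Phi\rangle_H=O\bigl((|\varepsilon|\,|\log|\varepsilon||)^2\bigr)$; hence $\tilde J_\varepsilon(t,\eta)=J_\varepsilon\bigl(W_{\delta_\varepsilon(t),\eta}\bigr)+o(|\varepsilon|)$, $C^1$-uniformly for $\eta\in\mathbb{R}^m$, $t\in[\alpha,\beta]$.

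\emph{Step 2 (expanding $J_\varepsilon(W_{\delta,\eta})$).} Writing $J_\varepsilon(W_{\delta,\eta})=\tfrac12\int_M a|\nabla_g W_{\delta,\eta}|^2+\tfrac12\int_M ah W_{\delta,\eta}^{2}-\tfrac1{2^{*}_m-\varepsilon}\int_M a(W_{\delta,\eta}^{+})^{2^{*}_m-\varepsilon}$, I pass, in each integral, to geodesic normal coordinates $y=\exp_{\xi_0}^{-1}(x)$, rescale $y=\delta z$, recentre $w=z-\eta$, and insert the Taylor expansions of $g_{ij}$, $g^{ij}$, $\sqrt{|g|}$ (whose quadratic parts carry $R_{ikjl}(\xi_0)$ and $\mathrm{Ric}(\xi_0)$), of $a\circ\exp_{\xi_0}$ (linear part zero, since $\xi_0$ is critical, quadratic part $\tfrac12 D^2_g a(\xi_0)$, with Euclidean trace $\Delta_g a(\xi_0)$), and of $h\circ\exp_{\xi_0}$; in the nonlinear term I write $(W^{+})^{2^{*}_m-\varepsilon}=(W^{+})^{2^{*}_m}e^{-\varepsilon\log W^{+}}$, expand the exponential (this produces the $\varepsilon\log\delta$ terms), and substitute $\log\delta=\tfrac12\log|\varepsilon|+\tfrac12\log t$. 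Collecting by homogeneity, the $O(1)$ term gives $a_m\,a(\xi_0)$, the $O(\varepsilon\log|\varepsilon|)$ term gives $-b_m\varepsilon\log|\varepsilon|\,a(\xi_0)$, the $O(\varepsilon)$ term (from $\int_{\mathbb{R}^m}U^{2^{*}_m}\log U$ and $\tfrac1{2^{*}_m-\varepsilon}=\tfrac1{2^{*}_m}+O(\varepsilon)$) gives $c_m\varepsilon\,a(\xi_0)$, and the $O(\delta^{2})=O(|\varepsilon|t)$ terms together with the $\tfrac12\log t$ half of the $\varepsilon\log\delta$ term assemble into $d_m|\varepsilon|\,a(\xi_0)\,\Phi(t,\eta)$. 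The $\eta$-dependence reduces to $\tfrac12 D^2_g a(\xi_0)[\eta,\eta]/a(\xi_0)$ because after recentring the only surviving $\delta^{2}$-order $\eta$-term is $\tfrac{\delta^{2}}{2}D^2_g a(\xi_0)[w+\eta,w+\eta]$ tested against the radial weights $|\nabla U(w)|^{2}$ (in the gradient term) and $U(w)^{2^{*}_m}$ (in the nonlinear term) -- the cross term $D^2_g a(\xi_0)[w,\eta]$ is odd and vanishes -- whereas the $\eta$-terms produced by the curvature expansions cancel between $\int_M a|\nabla_g W|^2$ and $\int_M a(W^{+})^{2^{*}_m-\varepsilon}$, using $\int_{\mathbb{R}^m}|\nabla U|^2=\int_{\mathbb{R}^m}U^{2^{*}_m}$ and $\tfrac1{2^{*}_m}=\tfrac{m-2}{2m}$.

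The routine but delicate part -- and the only real obstacle -- is the quantitative bookkeeping in Step 2. One must combine the curvature contributions of $g^{ij}$ and $\sqrt{|g|}$ (using the symmetries of the Riemann tensor, the identity $\int_{\mathbb{R}^m}z^{i}z^{j}\varphi(|z|)\,dz=\tfrac{\delta^{ij}}{m}\int_{\mathbb{R}^m}|z|^{2}\varphi(|z|)\,dz$, and $\sum_i R_{ikil}=\mathrm{Ric}_{kl}$) so that only $S_g(\xi_0)$ survives; add the direct $\tfrac12 D^2_g a(\xi_0)$-contributions of the three integrals so as to produce the $\Delta_g a(\xi_0)/a(\xi_0)$ term with the asserted coefficient $\tfrac{3(m-2)}{2(m-1)}$; and substitute the explicit values of the Euclidean integrals $\int_{\mathbb{R}^m}U^{2}$, $\int_{\mathbb{R}^m}|z|^{2}U^{2^{*}_m}$, $\int_{\mathbb{R}^m}|z|^{2}|\nabla U|^{2}$, $\int_{\mathbb{R}^m}U^{2^{*}_m}$, $\int_{\mathbb{R}^m}U^{2^{*}_m}\log U$ (all computable through Beta functions) and their ratios, so as to obtain the rational prefactor $\tfrac{2(m-1)}{(m-2)(m-4)}$, the logarithmic coefficient $\tfrac{(m-2)^{2}}{8}$, and the constants $a_m,\dots,d_m$ (with $d_m=\tfrac1m\int_{\mathbb{R}^m}U^{2^{*}_m}$). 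The hypothesis $m\ge9$ enters exactly here, just as it already does in Lemma \ref{lem:3.2} and Proposition \ref{prop:Phi}: it guarantees that all the integrals over $\mathbb{R}^m$ appearing in the expansion, and all the remainders generated by the third-order terms of the local expansions, converge and are $o(|\varepsilon|)$, so that \eqref{jtilde} holds with the stated remainder; the $C^1$-uniformity then follows by differentiating the same term-by-term expansions.
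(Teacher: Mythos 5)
Your proposal follows essentially the same route as the paper: part (i) is the standard Ljapunov--Schmidt argument (the paper simply cites \cite{MPV}, Proposition 2.2), and part (ii) is proved by the same two steps -- first $\tilde J_{\varepsilon}(t,\eta)=J_{\varepsilon}\left(W_{\delta_{\varepsilon}(t),\eta}\right)+o(|\varepsilon|)$ $C^{1}$-uniformly, using Lemma \ref{lem:3.2}, Proposition \ref{prop:Phi} and the bound on the multipliers $c_i$, then the expansion of $J_{\varepsilon}\left(W_{\delta_{\varepsilon}(t),\eta}\right)$ in normal coordinates, which is exactly the paper's computation of the integrals $I_1,\dots,I_6$. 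The only caveat is that you assert rather than carry out the final coefficient bookkeeping of Step 2 (the part occupying most of the paper's proof, producing the constants ${2(m-1)\over(m-2)(m-4)}$, ${3(m-2)\over2(m-1)}$ and ${(m-2)^2\over8}$), but the ingredients you list for it (parity, the radial integral identities, the cancellation of the curvature $\eta$-terms, $\Delta_g a(\xi_0)=\sum_k\partial^2\tilde a/\partial y_k^2(0)$) are the correct ones.
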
\begin{proof}
 The proof of [i] is quite standard and can be obtained arguing exactly as in the proof of Proposition 2.2 of \cite{MPV}.
 \\

The proof of  [ii] follows in two steps.
 \\
 {\bf Step 1} We prove that
$$
\tilde{J}_{\varepsilon}(t,\eta)=J_{\varepsilon}\left(W_{\delta_{\varepsilon}(t),\eta}\right)+o(|\varepsilon|)
$$
$C^{1}-$uniformly with respect to $\eta\in\mathbb{R}^{m}$ and $t\in[\alpha,\beta]$.\\
\begin{proof}
First, let us prove the $C^ 0-$estimate.
We have that
\begin{align*}
&\tilde{J}_{\varepsilon}(t,\eta)-J_{\varepsilon}\left(W_{\delta_{\varepsilon}(t),\eta}\right)\\ =& \int_{M}a(x)\left(\nabla_{g}W_{\delta_{\varepsilon}(t),\eta}\nabla_{g}\Phi_{\delta_{\varepsilon}(t),\eta}+
hW_{\delta_{\varepsilon}(t),\eta}\Phi_{\delta_{\varepsilon}(t),\eta}-
f_{\varepsilon}(W_{\delta_{\varepsilon}(t),\eta})\Phi_{\delta_{\varepsilon}(t),\eta}\right)d\mu_{g}\\
 & -\int_{M}a(x)\left(\left(W_{\delta_{\varepsilon}(t),\eta}+
 \Phi_{\delta_{\varepsilon}(t),\eta}\right)^{2^{*}-\varepsilon}-
 \left(W_{\delta_{\varepsilon}(t),\eta}\right)^{2^{*}-\varepsilon}
  -f_{\varepsilon}(W_{\delta_{\varepsilon}(t),\eta})\Phi_{\delta_{\varepsilon}(t),\eta}\right)d\mu_{g}\\
 & +\frac{1}{2}\|\Phi_{\delta_{\varepsilon}(t),\eta}\|_{H}^{2}
\end{align*}
Since
\begin{align*}
\int_{M}a(x)\nabla_{g}W_{\delta_{\varepsilon}(t),\eta}\nabla\Phi_{\delta_{\varepsilon}(t),\eta}d\mu_{g}=&
-\int_{M}\nabla_{g}a(x)\nabla_{g}W_{\delta_{\varepsilon}(t),\eta}\Phi_{\delta_{\varepsilon}(t),\eta}d\mu_{g}\\ &-
\int_{M}a(x)\Delta_{g}W_{\delta_{\varepsilon}(t),\eta}\Phi_{\delta_{\varepsilon}(t),\eta}d\mu_{g}
\end{align*}
we need an estimate of the term
\[
\int_{M}\nabla_{g}a(x)\nabla_{g}W_{\delta_{\varepsilon}(t),\eta}\Phi_{\delta_{\varepsilon}(t),\eta}d\mu_{g}.
\]
By (\ref{eq:4.3}) we get that
\[
\|\nabla_{g}a(x)\nabla_{g}W_{\delta_{\varepsilon}(t),\eta}\|_{\frac{2m}{m+2}}=O(|\varepsilon|)
\]
and, by Holder inequality and by Proposition \ref{prop:Phi} we obtain
\[
\left|\int_{M}\nabla_{g}a(x)\nabla_{g}W_{\delta_{\varepsilon}(t),\eta}\Phi_{\delta_{\varepsilon}(t),\eta}d\mu_{g}\right|=o(|\varepsilon|).
\]
The following estimate is analogous to (4.11) in {[}\cite{MPV}, Lemma 4.2{]}:
for any $\theta\in(0,1)$
\[
\|-\Delta_{g}W_{\delta_{\varepsilon}(t),\eta}+hW_{\delta_{\varepsilon}(t),\eta}-f_{\varepsilon}(W_{\delta_{\varepsilon}(t),\eta})\|_{\frac{2m}{m+2}}=o(|\varepsilon|^{\theta}).
\]
Thus
\[
\left|\int_{M}a(x)\left(-\Delta_{g}W_{\delta_{\varepsilon}(t),\eta}+hW_{\delta_{\varepsilon}(t),\eta}-f_{\varepsilon}(W_{\delta_{\varepsilon}(t),\eta})\right)\Phi_{\delta_{\varepsilon}(t),\eta}d\mu_{g}\right|=o(|\varepsilon|).
\]
Similarly, following again {[}\cite{MPV}, Lemma 4.2{]} it is easy to prove
that
\[
\left|\int_{M}a(x)\left(\left(W_{\delta_{\varepsilon}(t),\eta}+\Phi_{\delta_{\varepsilon}(t),\eta}\right)^{2^{*}-\varepsilon}-\left(W_{\delta_{\varepsilon}(t),\eta}\right)^{2^{*}-\varepsilon}-f_{\varepsilon}(W_{\delta_{\varepsilon}(t),\eta})\Phi_{\delta_{\varepsilon}(t),\eta}\right)d\mu_{g}\right|=o(|\varepsilon|).
\]
that concludes the proof of the $C^0-$estimate.

Let us prove the  $C^{1}-$estimate.
We point out that
\begin{equation}
\frac{\partial}{\partial t}W_{\delta_{\varepsilon}(t),\eta}(x)=\frac{1}{2t}Z_{\delta_{\varepsilon}(t),\eta}^{0}(x)\label{eq:2.6}
\end{equation}
\begin{equation}
\frac{\partial}{\partial\eta_{k}}W_{\delta_{\varepsilon}(t),\eta}(x)=Z_{\delta_{\varepsilon}(t),\eta}^{k}(x)\text{ for }1\le k\le m\label{eq:2.7}
\end{equation}
and
\begin{equation}
\|Z_{\delta,\eta}^{k}\|_{H}^{2}\rightarrow a(\xi_{0})\|V_{k}\|_{D^{1,2}(\mathbb{R}^{m})}\text{ for }0\le k\le m.\label{eq:2.8}
\end{equation}

 We have
\[
\frac{\partial}{\partial t}J_{\varepsilon}\left(W_{\delta_{\varepsilon}(t),\eta}+\Phi_{\delta_{\varepsilon}(t),\eta}\right)-\frac{\partial}{\partial t}J_{\varepsilon}\left(W_{\delta_{\varepsilon}(t),\eta}\right)
\]
 and
\[
\frac{\partial}{\partial\eta_{k}}J_{\varepsilon}\left(W_{\delta_{\varepsilon}(t),\eta}+\Phi_{\delta_{\varepsilon}(t),\eta}\right)-\frac{\partial}{\partial\eta_{k}}J_{\varepsilon}\left(W_{\delta_{\varepsilon}(t),\eta}\right)\text{ for }1\le k\le m.
\]
By (\ref{eq:2.6}) and (\ref{eq:2.7}) we have
\begin{align*}
&\frac{\partial}{\partial t}J_{\varepsilon}\left(W_{\delta_{\varepsilon}(t),\eta}+\Phi_{\delta_{\varepsilon}(t),\eta}\right)-
\frac{\partial}{\partial t}J_{\varepsilon}\left(W_{\delta_{\varepsilon}(t),\eta}\right)\\ = &
\left(J_{\varepsilon}'\left(W_{\delta_{\varepsilon}(t),\eta}+\Phi_{\delta_{\varepsilon}(t),\eta}\right)-J_{\varepsilon}'\left(W_{\delta_{\varepsilon}(t),\eta}\right)\right)\left[\frac{\partial}{\partial t}W_{\delta_{\varepsilon}(t),\eta}\right]\\
 & +J_{\varepsilon}'\left(W_{\delta_{\varepsilon}(t),\eta}+\Phi_{\delta_{\varepsilon}(t),\eta}\right)\left[\frac{\partial}{\partial t}\Phi_{\delta_{\varepsilon}(t),\eta}\right]\\
= & \left(J_{\varepsilon}'\left(W_{\delta_{\varepsilon}(t),\eta}+\Phi_{\delta_{\varepsilon}(t),\eta}\right)-J_{\varepsilon}'\left(W_{\delta_{\varepsilon}(t),\eta}\right)\right)\left[\frac{1}{2t}Z_{\delta_{\varepsilon}(t),\eta}^{0}\right]\\
 & +J_{\varepsilon}'\left(W_{\delta_{\varepsilon}(t),\eta}+\Phi_{\delta_{\varepsilon}(t),\eta}\right)\left[\frac{\partial}{\partial t}\Phi_{\delta_{\varepsilon}(t),\eta}\right].
\end{align*}
Analogously
\begin{align*}&
\frac{\partial}{\partial\eta_{k}}J_{\varepsilon}\left(W_{\delta_{\varepsilon}(t),\eta}+
\Phi_{\delta_{\varepsilon}(t),\eta}\right)-\frac{\partial}{\partial\eta_{k}}
J_{\varepsilon}\left(W_{\delta_{\varepsilon}(t),\eta}\right)\\ = & \left(J_{\varepsilon}'\left(W_{\delta_{\varepsilon}(t),\eta}+\Phi_{\delta_{\varepsilon}(t),\eta}\right)-J_{\varepsilon}'\left(W_{\delta_{\varepsilon}(t),\eta}\right)\right)\left[Z_{\delta_{\varepsilon}(t),\eta}^{k}\right]\\
 & +J_{\varepsilon}'\left(W_{\delta_{\varepsilon}(t),\eta}+\Phi_{\delta_{\varepsilon}(t),\eta}\right)\left[\frac{\partial}{\partial\eta_{k}}\Phi_{\delta_{\varepsilon}(t),\eta}\right].
\end{align*}
For $0\le k\le m$ we have
\begin{align*}
&\left(J_{\varepsilon}'\left(W_{\delta_{\varepsilon}(t),\eta}
+\Phi_{\delta_{\varepsilon}(t),\eta}\right)-J_{\varepsilon}'\left(W_{\delta_{\varepsilon}(t),\eta}\right)
\right)\left[Z_{\delta_{\varepsilon}(t),\eta}^{k}\right]\\ =&-\int_{M}\nabla_{g}
a(x)\nabla_{g}Z_{\delta_{\varepsilon}(t),\eta}^{k}\Phi_{\delta_{\varepsilon}(t),\eta}d\mu_{g}\\ &
+\int_{M}a(x)\left(-\Delta_{g}Z_{\delta_{\varepsilon}(t),\eta}^{k}
+hZ_{\delta_{\varepsilon}(t),\eta}^{k}-f_{\varepsilon}'(W_{\delta_{\varepsilon}(t),\eta})
Z_{\delta_{\varepsilon}(t),\eta}^{k}\right)\Phi_{\delta_{\varepsilon}(t),\eta}d\mu_{g}\\ &
+\int_{M}a(x)\left(f_{\varepsilon}(W_{\delta_{\varepsilon}(t),\eta}
+\Phi_{\delta_{\varepsilon}(t),\eta})-f_{\varepsilon}(W_{\delta_{\varepsilon}(t),\eta})
-f_{\varepsilon}'(W_{\delta_{\varepsilon}(t),\eta})\Phi_{\delta_{\varepsilon}(t),\eta}\right)
Z_{\delta_{\varepsilon}(t),\eta}^{k}d\mu_{g}.
\end{align*}
At this point, by (\ref{eq:2.8}), we have
\[
\left|\int_{M}\nabla_{g}a(x)\nabla_{g}Z_{\delta_{\varepsilon}(t),\eta}^{k}\Phi_{\delta_{\varepsilon}(t),\eta}d\mu_{g}\right|\le\|\Phi_{\delta_{\varepsilon}(t),\eta}\|_{L^{2}}\|\nabla_{g}a(x)\nabla_{g}Z_{\delta_{\varepsilon}(t),\eta}^{k}\|_{L^{2}}=o(|\varepsilon|).
\]
Arguing as in (4.26) of {[}\cite{MPV}, Lemma 4.2{]} we have that, for $0\le k\le m$,
and for $\theta\in(0,1)$
\[
\|-\Delta_{g}Z_{\delta_{\varepsilon}(t),\eta}^{k}+hZ_{\delta_{\varepsilon}(t),\eta}^{k}-f_{\varepsilon}'(W_{\delta_{\varepsilon}(t),\eta})Z_{\delta_{\varepsilon}(t),\eta}^{k}\|_{\frac{2m}{m+2}}=O(|\varepsilon|^{\theta}).
\]
This implies
\[
\left|\int_{M}a(x)\left(-\Delta_{g}Z_{\delta_{\varepsilon}(t),\eta}^{k}+hZ_{\delta_{\varepsilon}(t),\eta}^{k}-f_{\varepsilon}'(W_{\delta_{\varepsilon}(t),\eta})Z_{\delta_{\varepsilon}(t),\eta}^{k}\right)\Phi_{\delta_{\varepsilon}(t),\eta}d\mu_{g}\right|=o(|\varepsilon|).
\]
 Again, arguing as in {[}\cite{MPV}, Lemma 4.2{]} we have
\[
\left|\int_{M}a(x)\left(f_{\varepsilon}(W_{\delta_{\varepsilon}(t),\eta}+\Phi_{\delta_{\varepsilon}(t),\eta})-f_{\varepsilon}(W_{\delta_{\varepsilon}(t),\eta})-f_{\varepsilon}'(W_{\delta_{\varepsilon}(t),\eta})\Phi_{\delta_{\varepsilon}(t),\eta}\right)Z_{\delta_{\varepsilon}(t),\eta}^{k}d\mu_{g}\right|=o(|\varepsilon|).
\]
It remains to estimate the term $J_{\varepsilon}'\left(W_{\delta_{\varepsilon}(t),\eta}+\Phi_{\delta_{\varepsilon}(t),\eta}\right)\left[\frac{\partial}{\partial t}\Phi_{\delta_{\varepsilon}(t),\eta}\right]$.
By (\ref{eq:1.7}), since $\Phi_{\delta_{\varepsilon}(t),\eta}\in K_{\delta_{\varepsilon}(t),\eta}^{\bot}$
we have
\begin{align*}
J_{\varepsilon}'\left(W_{\delta_{\varepsilon}(t),\eta}+\Phi_{\delta_{\varepsilon}(t),\eta}\right)\left[\frac{\partial}{\partial t}\Phi_{\delta_{\varepsilon}(t),\eta}\right] & =\sum_{j=0}^{m}\lambda_{\delta_{\varepsilon}(t),\eta}^{j}\left\langle Z_{\delta_{\varepsilon}(t),\eta}^{j},\frac{\partial}{\partial t}\Phi_{\delta_{\varepsilon}(t),\eta}\right\rangle _{H}\\
 & =-\sum_{j=0}^{m}\lambda_{\delta_{\varepsilon}(t),\eta}^{j}\left\langle \frac{\partial}{\partial t}Z_{\delta_{\varepsilon}(t),\eta}^{j},\Phi_{\delta_{\varepsilon}(t),\eta}\right\rangle _{H}
\end{align*}
By easy computation we have that $\left\Vert \frac{\partial}{\partial t}Z_{\delta_{\varepsilon}(t),\eta}^{j}\right\Vert _{H}=O(1)$
and $\left\Vert \frac{\partial}{\partial\eta_{k}}Z_{\delta_{\varepsilon}(t),\eta}^{j}\right\Vert _{H}=O(1)$
for all $1\le k\le m$. By {[}\cite{MPV}, Lemma 4.2{]} we have that $\sum_{j=0}^{m}\left|\lambda_{\delta_{\varepsilon}(t),\eta}^{j}\right|=O(|\varepsilon|^{1/2}).$
This, in light of Proposition \ref{prop:Phi} ensures that
\[
J_{\varepsilon}'\left(W_{\delta_{\varepsilon}(t),\eta}+\Phi_{\delta_{\varepsilon}(t),\eta}\right)\left[\frac{\partial}{\partial t}\Phi_{\delta_{\varepsilon}(t),\eta}\right]=o(|\varepsilon|).
\]
The estimate for $J_{\varepsilon}'\left(W_{\delta_{\varepsilon}(t),\eta}+\Phi_{\delta_{\varepsilon}(t),\eta}\right)\left[\frac{\partial}{\partial\eta_{k}}\Phi_{\delta_{\varepsilon}(t),\eta}\right]$,
$k\in[1,m]$ can be obtained in a similar way. This concludes the
proof. \end{proof}

 {\bf Step 2} We prove that $J_{\varepsilon}\left(W_{\delta_{\varepsilon}(t),\eta}\right)$ satisfies expansion \eqref{jtilde}
 $C^{1}-$uniformly with respect to $\eta\in\mathbb{R}^{m}$ and $t\in[\alpha,\beta]$.

\begin{proof} Let us prove the $C^{0}-$estimate.

It holds
\begin{align*}
 J_{\varepsilon}\left(W_{\delta_{\varepsilon}(t),\eta}\right)= &  a(\xi_{0})\left( \underbrace{\frac{1}{2}\int_{M} |\nabla_{g}W_{\delta,\eta}|^{2}d\mu_{g} }_{I_1}+\underbrace{\frac{1}{2}\int_{M} h(x)W_{\delta,\eta}^{2} d\mu_{g}}_{I_2} -\underbrace{ \frac{1}{2_{m}^{*}-\varepsilon}\int_{M}W_{\delta,\eta}^{2_{m}^{*}-\varepsilon}d\mu_{g}}_{I_3}\right) \\
 & +\underbrace{\frac{1}{2}\int_{M}(a(x)-a(\xi_{0}))  |\nabla_{g}W_{\delta,\eta}|^{2}d\mu_{g}}_{I_4}+\underbrace{\frac{1}{2}\int_{M}(a(x)-a(\xi_{0})) h(x)W_{\delta,\eta}^{2}d\mu_{g}}_{I_5}\\ &-\underbrace{\frac{1}{2_{m}^{*}-\varepsilon}\int_{M}(a(x)-a(\xi_{0})) W_{\delta,\eta}^{2_{m}^{*}-\varepsilon} d\mu_{g}}_{I_6}.
\end{align*}

First of all, let us estimate  the integrals $I_1,$ $I_2$ and $I_3.$
\begin{align*}
I_{1}= & \frac{\delta^{-m+2}}{2}\int_{\mathbb{R}^{m}}g^{ij}(y)\frac{\partial}{\partial y_{i}}\left(\chi(y-\delta\eta)U\left(\frac{y}{\delta}-\eta\right)\right)\frac{\partial}{\partial z_{j}}\left(\chi(y-\delta\eta)U\left(\frac{y}{\delta}-\eta\right)\right)|g(y)|^{1/2}dy\\
= & \frac{1}{2}\int_{\mathbb{R}^{m}}g^{ij}(\delta(z+\eta))\frac{\partial}{\partial z_{i}}\left(\chi(\delta z)U(z)\right)\frac{\partial}{\partial z_{j}}\left(\chi(\delta z)U(z)\right)|g(\delta(z+\eta))|^{1/2}dz\\
= & \frac{1}{2}\int_{\mathbb{R}^{m}}g^{ij}(\delta(z+\eta))\frac{\partial U}{\partial z_{i}}(z)\frac{\partial U}{\partial z_{j}}(z)|g(\delta(z+\eta))|^{1/2}dz+o(\delta^{2})\\
= & \frac{1}{2}\int_{\mathbb{R}^{m}}\left(\delta_{ij}+\frac{\delta^{2}}{2}\sum_{i,j,r,k=1}^{n}\frac{\partial^{2}g^{ij}}{\partial y_{r}\partial y_{k}}(0)(z_{r}+\eta_{r})(z_{k}+\eta_{k})\right)\frac{\partial U}{\partial z_{i}}(z)\frac{\partial U}{\partial z_{j}}(z)\times\\
 & \times\left(1-\frac{\delta^{2}}{4}\sum_{s,r,k=1}^{n}\frac{\partial^{2}g^{ss}}{\partial y_{r}\partial y_{k}}(0)(z_{r}+\eta_{r})(z_{k}+\eta_{k})\right)dz+o(\delta^{2})\\
 = & \frac{1}{2}\int_{\mathbb{R}^{m}}|\nabla U|^{2}dz+\frac{|\varepsilon|t}{4}\sum_{i,j,r,k=1}^{m}\frac{\partial^{2}g^{ij}}{\partial y_{r}\partial y_{k}}(0)\int_{\mathbb{R}^{m}}\frac{\partial U}{\partial z_{i}}(z)\frac{\partial U}{\partial z_{j}}(z)z_{r}z_{k}dz\\
 & +\frac{|\varepsilon|t}{4}\sum_{i,j,r,k=1}^{m}\frac{\partial^{2}g^{ij}}{\partial y_{r}\partial y_{k}}(0)\eta_{r}\eta_{k}\int_{\mathbb{R}^{m}}\frac{\partial U}{\partial z_{i}}(z)\frac{\partial U}{\partial z_{j}}(z)dz\\
 & -\frac{|\varepsilon|t}{8}\sum_{s,k=1}^{m}\frac{\partial^{2}g^{ss}}{\partial y_{k}^{2}}(0)\int_{\mathbb{R}^{m}}|\nabla U|^{2}z_{k}^{2}dz\\
 & -\frac{|\varepsilon|t}{8}\sum_{s,r,k=1}^{m}\frac{\partial^{2}g^{ss}}{\partial y_{r}\partial y_{k}}(0)\eta_{r}\eta_{k}\int_{\mathbb{R}^{m}}|\nabla U|^{2}dzdz+o(\varepsilon)\\\
 = & \frac{1}{2}\int_{\mathbb{R}^{m}}|\nabla U|^{2}dz+\frac{|\varepsilon|t}{4}\sum_{i,j,r,k=1}^{m}\frac{\partial^{2}g^{ij}}{\partial y_{r}\partial y_{k}}(0)\int_{\mathbb{R}^{m}}\left(\frac{U'(z)}{|z|}\right)^{2}z_{i}z_{j}z_{r}z_{k}dz\nonumber \\
 & +\frac{|\varepsilon|t}{4}\sum_{i,r,k=1}^{m}\frac{\partial^{2}g^{ii}}{\partial y_{r}\partial y_{k}}(0)\eta_{r}\eta_{k}\int_{\mathbb{R}^{m}}\left(\frac{U'(z)}{|z|}\right)^{2}z_{i}^{2}\nonumber \\
 & -\frac{|\varepsilon|t}{8}\sum_{i,k=1}^{m}\frac{\partial^{2}g^{ii}}{\partial y_{k}^{2}}(0)\int_{\mathbb{R}^{m}}\left(U'(z)\right)^{2}z_{k}^{2}dz\nonumber \\
 & -\frac{|\varepsilon|t}{8}\sum_{i,r,k=1}^{m}\frac{\partial^{2}g^{ii}}{\partial y_{r}\partial y_{k}}(0)\eta_{r}\eta_{k}\int_{\mathbb{R}^{m}}\left(U'(z)\right)^{2}dz+o(\varepsilon).
\end{align*}

We set $\tilde{h}(y)=h(\exp_{\xi_0}(y))$. Then we get

 \begin{align*}
I_{2}= & \frac{\delta^{-m+2}}{2}\int_{\mathbb{R}^{m}}\tilde{h}(y)\chi^{2}(y-\delta\eta)U^{2}\left(\frac{y}{\delta}-\eta\right)|g(y)|^{1/2}dy\\
= & \frac{\delta^{2}}{2}\int_{\mathbb{R}^{m}}\tilde{h}(\delta(z+\eta))U^{2}(z)|g(\delta(z+\eta))|^{1/2}dz+o(\delta^{2})\\
= & \frac{\delta^{2}}{2}\int_{\mathbb{R}^{m}}\left(\tilde{h}(0)+O(\delta)\right)U^{2}(z)(1+O(\delta^{2}))dz+o(\delta^{2})\\
= & \frac{|\varepsilon|t}{2}\tilde{h}(0)\int_{\mathbb{R}^{m}}U^{2}(z)dz+o(\varepsilon).
\end{align*}

We notice that, by direct computation, and considering that $\delta=\sqrt{|\varepsilon|t}$
\begin{align*}
\frac{1}{2_{m}^{*}-\varepsilon} & =\frac{1}{2_{m}^{*}}+\frac{\varepsilon}{\left(2_{m}^{*}\right)^{2}}+o(\varepsilon);\\
\delta^{\varepsilon\frac{m-2}{2}} & =1+\frac{m-2}{4}\varepsilon\log\left(|\varepsilon|t\right)+o(\varepsilon);\\
U^{2_{m}^{*}-\varepsilon} & =U^{2_{m}^{*}}-\varepsilon U^{2_{m}^{*}}\log U+o(\varepsilon).
\end{align*}

Therefore

\begin{align*}
I_{3}= & \frac{\delta^{-\frac{m-2}{2}(2_{m}^{*}-\varepsilon)}}{2_{m}^{*}-\varepsilon}\int_{\mathbb{R}^{m}}\chi^{2_{m}^{*}-\varepsilon}(y-\delta\eta)U^{2_{m}^{*}-\varepsilon}\left(\frac{y}{\delta}-\eta\right)|g(y)|^{1/2}dy\\
= & \frac{\delta^{\varepsilon\frac{m-2}{2}}}{2_{m}^{*}-\varepsilon}\int_{\mathbb{R}^{m}}U^{2_{m}^{*}-\varepsilon}(z)|g(\delta(z+\eta))|^{1/2}dz+o(\delta^{2})\\
= & \frac{\delta^{\varepsilon\frac{m-2}{2}}}{2_{m}^{*}-\varepsilon}\int_{\mathbb{R}^{m}}U^{2_{m}^{*}-\varepsilon}(z)\left(1-\frac{\delta^{2}}{4}\sum_{i,r,k=1}^{n}\frac{\partial^{2}g^{ii}}{\partial y_{r}\partial y_{k}}(0)(z_{r}+\eta_{r})(z_{k}+\eta_{k})\right)dz+o(\delta^{2})\\
= & \frac{\delta^{\varepsilon\frac{m-2}{2}}}{2_{m}^{*}-\varepsilon}\int_{\mathbb{R}^{m}}U^{2_{m}^{*}-\varepsilon}(z)\left(1-\frac{\delta^{2}}{4}\sum_{i,r,k=1}^{n}\frac{\partial^{2}g^{ii}}{\partial y_{r}\partial y_{k}}(0)(z_{r}z_{k}+\eta_{r}\eta_{k})\right)dz+o(\delta^{2})\\ = & \left(\frac{1}{2_{m}^{*}}+\frac{\varepsilon}{\left(2_{m}^{*}\right)^{2}}\right)\left(1+\frac{m-2}{4}\varepsilon\log\left(|\varepsilon|t\right)\right)\int_{\mathbb{R}^{m}}\left(U^{2_{m}^{*}}(z)-\varepsilon U^{2_{m}^{*}}(z)\log U(z)\right)\\
 & \times\left(1-\frac{|\varepsilon|t}{4}\sum_{i,r,k=1}^{n}\frac{\partial^{2}g^{ii}}{\partial y_{r}\partial y_{k}}(0)(z_{r}z_{k}+\eta_{r}\eta_{k})\right)dz+o(\varepsilon)\\
= & \frac{1}{2_{m}^{*}}\int_{\mathbb{R}^{m}}U^{2_{m}^{*}}+\frac{\varepsilon}{2_{m}^{*}}\left(\frac{1}{2_{m}^{*}}\int_{\mathbb{R}^{m}}U^{2_{m}^{*}}(z)dz-\int_{\mathbb{R}^{m}}U^{2_{m}^{*}}(z)\log U(z)dz\right)\\
 & +\frac{1}{2_{m}^{*}}\frac{m-2}{4}\varepsilon\log\left(|\varepsilon|t\right)\int_{\mathbb{R}^{m}}U^{2_{m}^{*}}(z)dz+\\
 & -\frac{1}{2_{m}^{*}}\frac{|\varepsilon|t}{4}\sum_{i,r,k=1}^{m}\frac{\partial^{2}g^{ii}}{\partial y_{r}\partial y_{k}}(0)\eta_{r}\eta_{k}\int_{\mathbb{R}^{m}}U^{2_{m}^{*}}(z)dz\\
 & -\frac{1}{2_{m}^{*}}\frac{|\varepsilon|t}{4}\sum_{i,k=1}^{m}\frac{\partial^{2}g^{ii}}{\partial y_{k}^{2}}(0)\int_{\mathbb{R}^{m}}U^{2_{m}^{*}}(z)z_{r}^{2}dz+o(\varepsilon)
\end{align*}

Therefore we get
\begin{align*}
&I_1+I_2-I_3=   \frac{1}{2}\int_{\mathbb{R}^{m}}|\nabla U|^{2}dz--
 \frac{1}{2_{m}^{*}}\int_{\mathbb{R}^{m}}U^{2_{m}^{*}}dz\\
 &
  - \frac{1}{2_{m}^{*}}\frac{m-2}{4}\varepsilon\log\left(|\varepsilon|t\right)\int_{\mathbb{R}^{m}}U^{2_{m}^{*}}(z)dz+\\
  & -\frac{\varepsilon}{2_{m}^{*}}\left(\frac{1}{2_{m}^{*}}\int_{\mathbb{R}^{m}}U^{2_{m}^{*}}(z)dz-\int_{\mathbb{R}^{m}}U^{2_{m}^{*}}(z)\log U(z)dz\right)\\
   &+ \frac{|\varepsilon|t}{2}\tilde{h}(0)\int_{\mathbb{R}^{m}}U^{2}(z)dz
\\
&+ {|\varepsilon|t}\(\frac{1}{4}\sum_{i,j,r,k=1}^{m}\frac{\partial^{2}g^{ij}}{\partial y_{r}\partial y_{k}}(0)\int_{\mathbb{R}^{m}}\left(\frac{U'(z)}{|z|}\right)^{2}z_{i}z_{j}z_{r}z_{k}dz \right.\\ &\left.\qquad\quad -\sum_{i,k=1}^{m}\frac{\partial^{2}g^{ii}}{\partial y_{k}^{2}}(0)\(\frac{1}{8}\int_{\mathbb{R}^{m}}\left(U'(z)\right)^{2}z_{k}^{2}dz  +\frac{m-2}{8m}   \int_{\mathbb{R}^{m}}U^{2_{m}^{*}}(z)z_{r}^{2}dz\)\)\\
  &+{|\varepsilon|t}\underbrace{\sum_{i,r,k=1}^{m}\frac{\partial^{2}g^{ii}}{\partial y_{r}\partial y_{k}}(0)\eta_{r}\eta_{k}\(\frac{m-2}{8m}  \int_{\mathbb{R}^{m}}U^{2_{m}^{*}}(z)dz
  +\frac{1}{4}\int_{\mathbb{R}^{m}}\left(\frac{U'(z)}{|z|}\right)^{2}z_{i}^{2}dz -\frac{1}{8}\int_{\mathbb{R}^{m}}\left(U'(z)\right)^{2}dz\)}_{=0}
  \\ &+o(\varepsilon)\\
  &={K_m^{-m}\over m}\left[1-{(m-2)^2\over8}\varepsilon\log(|\varepsilon|t)-c_m\varepsilon+{2(m-1)\over (m-2)(m-4)}\(h(\xi)-{(m-2)\over 4(m-1)}S_g(\xi)\){|\varepsilon|t}+o(\varepsilon)\right],\end{align*}
 where $c_m$ is a constant which only depends on $m, $   $K_m:=\sqrt{4\over m(m-2)\omega_m^{2/m}}$ and $\omega_m$ is the volume of the unit $m-$sphere.
(see also    [\cite{MPV}, Lemma 4.1]).
Here we used the following facts
 \begin{align*}&{\displaystyle \sum_{i,j=1}^{m}\frac{\partial^{2}g^{ii}}{\partial z_{j}^{2}}(0)-\sum_{i,j=1}^{m}\frac{\partial^{2}g^{ij}}{\partial z_{i}\partial z_{j}}(0)=S_{g}(\xi_{0})},\\
 &\frac{1}{2}\int_{\mathbb{R}^{m}}|\nabla U|^{2}z_{i}^{2}dz-\frac{1}{2^{*}}\int_{\mathbb{R}^{m}}U^{2^{*}}z_{i}^{2}dz=\int_{\mathbb{R}^{m}}\left(\frac{\partial U}{\partial z_{i}}\right)^{2}z_{i}^{2}dz\\
 &\int_{\mathbb{R}^{m}}\left(\frac{U'(z)}{|z|}\right)^{2}z_{i}^{4}dz=3\int_{\mathbb{R}^{m}}\left(\frac{U'(z)}{|z|}\right)^{2}z_{i}^{2}z_{j}^{2}dz\\ &\int_{\mathbb{R}^{m}}\left(\frac{U'(z)}{|z|}\right)^{2}z_{i}z_{j}z_{k}z_{h}dz=\frac{1}{2}\int_{\mathbb{R}^{m}}\left(\frac{U'(z)}{|z|}\right)^{2}z_{1}^{4}dz(\delta_{ij}\delta_{hk}+\delta_{ik}\delta_{jh}+\delta_{ih}\delta_{jk}) . \end{align*}

 Now, let us estimate  the integrals $I_4,$ $I_5$ and $I_6.$
 We set $\tilde{a}(y)=a(\exp_{\xi_0}(y))$ and we denote
  by $\frac{\partial\tilde{a}}{\partial y_{s}}$  the derivative
of $\tilde{a}$ with respect to its $s$-th variable. Therefore, we have

\begin{align}
I_{4}&=\frac{1}{2}\int_{\mathbb{R}^{m}}[\sum_{i,j}\tilde{a}(\delta(z+\eta))-\tilde{a}(0)]g^{ij}(\delta(z+\eta))\frac{\partial}{\partial z_{i}}\left(\chi(\delta z)U(z)\right)\frac{\partial}{\partial z_{j}}\left(\chi(\delta z)U(z)\right)|g(\delta(z+\eta))|^{1/2}dz\nonumber\\ &
=\frac{\delta^{2}}{4}\int_{\mathbb{R}^{m}}\sum_{s,k}\frac{\partial^{2}\tilde{a}}{\partial y_{s}\partial y_{k}}(0)(z_{s}+\eta_{s})(z_{k}+\eta_{k})|\nabla U|^{2}dz+o(\delta^{2})\nonumber\\ &
=\sum_{s,k}\frac{\delta^{2}}{4}\frac{\partial^{2}\tilde{a}}{\partial y_{s}\partial y_{k}}(0)\int_{\mathbb{R}^{m}}z_{s}z_{k}|\nabla U|^{2}dz+\sum_{s,k}\frac{\delta^{2}}{4}\frac{\partial^{2}\tilde{a}}{\partial y_{s}\partial y_{k}}(0)\eta_{s}\eta_{k}\int_{\mathbb{R}^{m}}|\nabla U|^{2}dz+o(\delta^{2})\nonumber\\ &
=|\varepsilon|t\left\{ \frac{1}{4}\sum_k\frac{\partial^{2}\tilde{a}}{\partial^{2}y_{k}}(0)\int_{\mathbb{R}^{m}}z_{k}^{2}|\nabla U|^{2}dz+\sum_{s,k}\frac{\partial^{2}\tilde{a}}{\partial y_{s}\partial y_{k}}(0)\frac{\eta_{s}\eta_{k}}{4}\int_{\mathbb{R}^{m}}|\nabla U|^{2}dz\right\} +o(\varepsilon)\nonumber\\ &
=|\varepsilon|t\left\{ \frac{1}{4m}\sum\limits_{k=1}^m\frac{\partial^{2}\tilde{a}}{\partial^{2}y_{k}}(0)\int_{\mathbb{R}^{m}}|z|^{2}|\nabla U|^{2}dz+{1\over4}\sum\limits_{k,s=1}^m\frac{\partial^{2}\tilde{a}}{\partial y_{s}\partial y_{k}}(0) {\eta_{s}\eta_{k}} \int_{\mathbb{R}^{m}}|\nabla U|^{2}dz\right\} +o(\varepsilon).\label{eq:I1esp}
\end{align}
and by mean value theorem we get for some $\theta\in(0,1)$
\begin{align*}
I_{5}&=\frac{\delta^{2}}{2}\int_{\mathbb{R}^{m}}[\tilde{a}(\delta(z+\eta))-\tilde{a}(0)]\tilde{h}(\delta(z+\eta))\chi^{2}(\delta z)U^{2}(z)|g(\delta(z+\eta))|^{1/2}dz\\ &
=\frac{\delta^{4}}{2}\int_{\mathbb{R}^{m}}\sum_{s,k}\frac{\partial^{2}\tilde{a}}{\partial y_{s}\partial y_{k}}(\delta\theta z)\cdot(z_{s}+\eta_{s})(z_{k}+\eta_{k})\tilde{h}(\delta(z+\eta))\chi^{2}(\delta z)U^{2}(z)dz\\ &=O(\delta^{3})=o(\varepsilon).
\end{align*}
  Finally, using that
\[
\left|\int_{\mathbb{R}^{m}}|z|^{2}\left[U^{2_{m}^{*}-\varepsilon}(z)-U^{2_{m}^{*}}(z)\right]dz\right|\le|\varepsilon|\int_{\mathbb{R}^{m}}|z|^{2}U^{2_{m}^{*}-\theta\varepsilon}(z)\ln U(z)dz=o(1).
\]
and also that
\[
\frac{\delta^{\varepsilon\frac{m-2}{2}}}{2_{m}^{*}-\varepsilon}=\frac{1}{2_{m}^{*}-\varepsilon}+o(1)=\frac{1}{2_{m}^{*}}+o(1),
\]
we get
\begin{align*}
I_{6}&=\frac{\delta^{\varepsilon\frac{m-2}{2}}}{2_{m}^{*}-\varepsilon}\int_{\mathbb{R}^{m}}[\tilde{a}(\delta(z+\eta))-\tilde{a}(0)]\chi^{2_{m}^{*}-\varepsilon}(\delta z)U^{2_{m}^{*}-\varepsilon}(z)|g(\delta(z+\eta))|^{1/2}dz\\ &
=\frac{\delta^{2}}{2}\frac{\delta^{\varepsilon\frac{m-2}{2}}}{2_{m}^{*}-\varepsilon}\int_{\mathbb{R}^{m}}\sum_{s,k}\frac{\partial^{2}\tilde{a}}{\partial y_{s}\partial y_{k}}(0)(z_{s}+\eta_{s})(z_{k}+\eta_{k})U^{2_{m}^{*}-\varepsilon}(z)dz+o(\delta^{2})\\ &
=\frac{\delta^{2}}{2}\frac{\delta^{\varepsilon\frac{m-2}{2}}}{2_{m}^{*}-\varepsilon}\sum_{s,k}\frac{\partial^{2}\tilde{a}}{\partial y_{s}\partial y_{k}}(0)\eta_{s}\eta_{k}\int_{\mathbb{R}^{m}}U^{2_{m}^{*}-\varepsilon}(z)dz\\ &
+\frac{\delta^{2}}{2}\frac{\delta^{\varepsilon\frac{m-2}{2}}}{2_{m}^{*}-\varepsilon}\sum_{ k}\frac{\partial^{2}\tilde{a}}{\partial y_{k}^{2}}(0)\int_{\mathbb{R}^{m}}z_{k}^{2}U^{2_{m}^{*}}(z)dz+o(\delta^{2}) \\
&=\frac{|\varepsilon|t}{2\cdot2_{m}^{*}}\left(\sum_{k}\frac{\partial^{2}\tilde{a}}{\partial y_{k}^{2}}(0)\int_{\mathbb{R}^{m}}z_{k}^{2}U^{2_{m}^{*}}(z)dz+\sum_{s,k}\frac{\partial^{2}\tilde{a}}{\partial y_{s}\partial y_{k}}(0)\eta_{s}\eta_{k}\int_{\mathbb{R}^{m}}U^{2_{m}^{*}}(z)dz\right)+o(|\varepsilon|) \\
&= {|\varepsilon|t} \left({m-2\over 4m^2}\sum\limits_{k=1}^m\frac{\partial^{2}\tilde{a}}{\partial y_{k}^{2}}(0)\int_{\mathbb{R}^{m}}|z|^{2}U^{2_{m}^{*}}(z)dz+{m-2\over4m}\sum\limits_{k,s=1}^m\frac{\partial^{2}\tilde{a}}{\partial y_{s}\partial y_{k}}(0) {\eta_{s}\eta_{k}}\int_{\mathbb{R}^{m}}U^{2_{m}^{*}}(z)dz\right)+o(|\varepsilon|).
\end{align*}
Therefore, we get
\begin{align*}I_4 +I_5-I_6=&|\varepsilon|t\left\{\sum\limits_{k=1}^m\frac{\partial^{2}\tilde{a}}{\partial y_{k}^{2}}(0)\left[\frac{1}{4m}\int_{\mathbb{R}^{m}}|z|^{2}|\nabla U|^{2}dz-{m-2\over 4m^2}\int_{\mathbb{R}^{m}}|z|^{2}U^{2_{m}^{*}}(z)dz\right]
\right.\\ &\left.
+\sum\limits_{k,s=1}^m\frac{\partial^{2}\tilde{a}}{\partial y_{s}\partial y_{k}}(0) {\eta_{s}\eta_{k}}\left[\frac{1}{4}\int_{\mathbb{R}^{m}} |\nabla U|^{2}dz-{m-2\over 4m }\int_{\mathbb{R}^{m}}U^{2_{m}^{*}}(z)dz\right]\right\} +o(|\varepsilon|).\end{align*}

A straightforward computation shows that
$$\frac{1}{4m}\int_{\mathbb{R}^{m}}|z|^{2}|\nabla U|^{2}dz-{m-2\over 4m^2}\int_{\mathbb{R}^{m}}|z|^{2}U^{2_{m}^{*}}(z)dz={3K_m^{-m}\over m(m-4)} $$
and
$$\frac{1}{4}\int_{\mathbb{R}^{m}} |\nabla U|^{2}dz-{m-2\over 4m }\int_{\mathbb{R}^{m}}U^{2_{m}^{*}}(z)dz={ K_m^{-m}\over 2m }.$$
It is enough to use the following ingredients.
For any positive real numbers $p$ and $q$ such that $p-q>1$, we let
$$I^q_p=\int_0^{+\infty}\frac{r^q}{\(1+r\)^p}dr.$$
 In particular, there hold
$$I^q_{p+1}=\frac{p-q-1}{p}I^q_p\quad\text{and}\quad I^{q+1}_{p+1}=\frac{q+1}{p-q-1}I^q_{p+1}\,.
$$
Moreover, we have
$$
I_m^{\frac{m}{2}}= \frac{2K_m^{-m}}{\alpha_m^2(m-2)^2\omega_{m-1}}\,.
$$

Finally, we collect all the previous estimates and  we get the   $C^{0}-$estimate,  taking into account that
$$\Delta_g a (\xi_0)=\sum_{k=1}^{m}\frac{\partial^{2}\tilde{a}}{\partial y_{k}^{2}}(0)\ \hbox{and}\ D^2_g a(\xi_0)[\eta,\eta]=\sum_{ k,s=1}^{m} \frac{\partial^{2}\tilde{a}}{\partial y_{k}\partial y_{s}}(0)\eta_{k}\eta_{s} .$$

  Let us prove the $C^{1}-$ estimate.

We first consider the derivative of the term $A(\delta_{\varepsilon}(t),\eta)=\frac{1}{2}\int_{M}a(x)|\nabla_{g}W_{\delta_{\varepsilon}(t),\eta}|^{2}d\mu_{g}$
with respect to $t$. Since $\delta_{\varepsilon}^{'}(t)\delta_{\varepsilon}(t)=\frac{\varepsilon}{2}$
we get
\begin{align*}
\frac{\partial}{\partial t}A(\delta_{\varepsilon}(t),\eta)= & \delta_{\varepsilon}^{'}(t)\frac{\partial}{\partial\delta}A(\delta_{\varepsilon}(t),\eta)=\\
= & \frac{\delta_{\varepsilon}^{'}(t)}{2}\frac{\partial}{\partial\delta}\int_{\mathbb{R}^{m}}\tilde{a}(\delta(z+\eta))g^{ij}(\delta(z+\eta))\frac{\partial}{\partial z_{i}}\left(\chi(\delta z)U(z)\right)\frac{\partial}{\partial z_{j}}\left(\chi(\delta z)U(z)\right)|g(\delta(z+\eta))|^{1/2}dz\\
= & \frac{\delta_{\varepsilon}^{'}(t)}{2}\int_{\mathbb{R}^{m}}\frac{\partial\tilde{a}}{\partial y_{k}}(\delta(z+\eta))(z_{k}+\eta_{k})g^{ij}(\delta(z+\eta))\frac{\partial U(z)}{\partial z_{i}}\frac{\partial U(z)}{\partial z_{j}}|g(\delta(z+\eta))|^{1/2}dz\\
 & +\frac{\delta_{\varepsilon}^{'}(t)}{2}\int_{\mathbb{R}^{m}}\tilde{a}(\delta(z+\eta))\frac{\partial^{2}g^{ij}}{\partial y_{k}}(\delta(z+\eta))(z_{k}+\eta_{k})\frac{\partial U(z)}{\partial z_{i}}\frac{\partial U(z)}{\partial z_{j}}dz\\
 & +\frac{\delta_{\varepsilon}^{'}(t)}{2}\int_{\mathbb{R}^{m}}\tilde{a}(\delta(z+\eta))g^{ij}(\delta(z+\eta))\frac{\partial U(z)}{\partial z_{i}}\frac{\partial U(z)}{\partial z_{j}}\frac{\partial|g|^{1/2}}{\partial y_{k}}(\delta(z+\eta))(z_{k}+\eta_{k})dz+o(\varepsilon)\\
= & \frac{\delta_{\varepsilon}^{'}(t)\delta}{2}\int_{\mathbb{R}^{m}}\frac{\partial^{2}\tilde{a}}{\partial y_{k}\partial y_{r}}(0)(z_{r}+\eta_{r})(z_{k}+\eta_{k})|\nabla U(z)|^2dz\\
 & +\frac{\delta_{\varepsilon}^{'}(t)\delta}{2}\int_{\mathbb{R}^{m}}\tilde{a}(0)\frac{\partial^{2}g^{ij}}{\partial y_{k}\partial y_{r}}(0)(z_{r}+\eta_{r})(z_{k}+\eta_{k})\frac{\partial U(z)}{\partial z_{i}}\frac{\partial U(z)}{\partial z_{j}}dz\\
 & -\frac{\delta_{\varepsilon}^{'}(t)\delta}{4}\int_{\mathbb{R}^{m}}\tilde{a}(0)|\nabla U(z)|^2\frac{\partial^{2}g^{ss}}{\partial y_{k}\partial y_{r}}(0)(z_{r}+\eta_{r})(z_{k}+\eta_{k})dz+o(\varepsilon)\\
= & \frac{\varepsilon}{4}\frac{\partial^{2}\tilde{a}}{\partial y_{k}\partial y_{r}}(0)\eta_{k}\eta_{r}\int_{\mathbb{R}^{m}}|\nabla U(z)|^2dz+\frac{\varepsilon}{4}\frac{\partial^{2}\tilde{a}}{\partial z_{k}^{2}}(0)\int_{\mathbb{R}^{m}}|\nabla U(z)|^2z_{k}^{2}dz\\
 & +\frac{\varepsilon}{4}\tilde{a}(0)\eta_{k}\eta_{r}\frac{\partial^{2}g^{ii}}{\partial y_{k}\partial y_{r}}(0)\int_{\mathbb{R}^{m}}\left(\frac{U'(z)}{|z|}\right)^{2}z_{i}^{2}dz\\
 & +\frac{\varepsilon}{4}\tilde{a}(0)\frac{\partial^{2}g^{ij}}{\partial y_{k}\partial y_{r}}(0)\int_{\mathbb{R}^{m}}\left(\frac{U'(z)}{|z|}\right)^{2}z_{i}z_{j}z_{k}z_{r}dz\\
 & -\frac{\varepsilon}{8}\tilde{a}(0)\eta_{k}\eta_{r}\frac{\partial^{2}g^{ss}}{\partial y_{k}\partial y_{r}}(0)\int_{\mathbb{R}^{m}}|\nabla U(z)|^2dz-\frac{\varepsilon}{8}\tilde{a}(0)\frac{\partial^{2}g^{ss}}{\partial y_{k}^{2}}(0)\int_{\mathbb{R}^{m}}|\nabla U(z)|^2z_{k}^{2}dz+o(\varepsilon)
\end{align*}
Here we  recognize the derivative of $I_1$ with respect to $t.$\\

In a similar way we consider $\frac{\partial}{\partial\eta_{k}}A(\delta_{\varepsilon}(t),\eta)$
for $1\le k\le m$. We have
\begin{align*}
\frac{\partial}{\partial\eta_{k}}A(\delta_{\varepsilon}(t),\eta)= & \frac{1}{2}\frac{\partial}{\partial\partial\eta_{k}}\int_{\mathbb{R}^{m}}\tilde{a}(\delta(z+\eta))g^{ij}(\delta(z+\eta))\frac{\partial}{\partial z_{i}}\left(\chi(\delta z)U(z)\right)\frac{\partial}{\partial z_{j}}\left(\chi(\delta z)U(z)\right)|g(\delta(z+\eta))|^{1/2}dz\\
= & \frac{\delta}{2}\int_{\mathbb{R}^{m}}\frac{\partial\tilde{a}}{\partial y_{k}}(\delta(z+\eta))g^{ij}(\delta(z+\eta))\frac{\partial U(z)}{\partial z_{i}}\frac{\partial U(z)}{\partial z_{j}}|g(\delta(z+\eta))|^{1/2}dz\\
 & +\frac{\delta}{2}\int_{\mathbb{R}^{m}}\tilde{a}(\delta(z+\eta))\frac{\partial^{2}g^{ij}}{\partial y_{k}}(\delta(z+\eta))\frac{\partial U(z)}{\partial z_{i}}\frac{\partial U(z)}{\partial z_{j}}dz\\
 & +\frac{\delta}{2}\int_{\mathbb{R}^{m}}\tilde{a}(\delta(z+\eta))(\delta(z+\eta))\frac{\partial U(z)}{\partial z_{i}}\frac{\partial U(z)}{\partial z_{j}}\frac{\partial|g|^{1/2}}{\partial y_{k}}(\delta(z+\eta))dz+o(\varepsilon)\\
= & \frac{\delta^{2}}{2}\int_{\mathbb{R}^{m}}\frac{\partial^{2}\tilde{a}}{\partial y_{k}\partial y_{r}}(0)(z_{r}+\eta_{r})|\nabla U(z)|^2dz\\
 & +\frac{\delta^{2}}{2}\int_{\mathbb{R}^{m}}\tilde{a}(0)\frac{\partial^{2}g^{ij}}{\partial y_{k}\partial y_{r}}(0)(z_{r}+\eta_{r})\frac{\partial U(z)}{\partial z_{i}}\frac{\partial U(z)}{\partial z_{j}}dz\\
 & -\frac{\delta^{2}}{4}\int_{\mathbb{R}^{m}}\tilde{a}(0)|\nabla U(z)|^2\frac{\partial^{2}g^{ss}}{\partial y_{k}\partial y_{r}}(0)(z_{r}+\eta_{r})dz+o(\varepsilon)\\
= & \frac{\varepsilon}{2}\frac{\partial^{2}\tilde{a}}{\partial y_{k}\partial y_{r}}(0)\eta_{r}\int_{\mathbb{R}^{m}}|\nabla U(z)|^2dz+\frac{\varepsilon}{2}\tilde{a}(0)\eta_{r}\frac{\partial^{2}g^{ii}}{\partial y_{k}\partial y_{r}}(0)\int_{\mathbb{R}^{m}}\left(\frac{U'(z)}{|z|}\right)^{2}z_{i}^{2}dz\\
 & -\frac{\varepsilon}{4}\tilde{a}(0)\eta_{r}\frac{\partial^{2}g^{ss}}{\partial y_{k}\partial y_{r}}\int_{\mathbb{R}^{m}}|\nabla U(z)|^2dz+o(\varepsilon)
\end{align*}
Here we  recognize the derivative of $I_1$ with respect to $\eta_k.$\\

We argue in a similar way for the all the other addenda in
 $J_{\varepsilon}\left(W_{\delta_{\varepsilon}(t),\eta}\right)$.
The claim follows easily.
\end{proof}

\end{proof}

 \begin{proof}[Proof of the Theorem \ref{main}]
 Set
 $ {\Theta(\xi_0)}:= h(\xi_{0}) -\frac{m-2}{4(m-1)}S_{g}(\xi_{0})   +{3m(m-2)\over 2(m-1)}\frac{\Delta_g \omega(\xi_0)}{\omega(\xi_{0})} .$
 Let $$\hbox{either}\ t_0: ={b_m\over\Theta(\xi_0)} \ \hbox{if}\ \varepsilon>0\ \hbox{or}\ t_0: =-{b_m\over\Theta(\xi_0)} \ \hbox{if}\ \varepsilon<0.$$
Since $\xi_0$ is a non degenerate critical point of $\omega,$ a straightforward computation shows that
the point $(t_0,0)$ is a non degenerate critical point of the function $\Phi$.   Therefore, by (ii) of Proposition \ref{lem:4.1}, if $\eps$ is small enough there exists $(t_\eps,\eta_\eps)\in(0,+\infty)\times\mathbb R^m$ which is a critical point of $\widetilde J_\eps $ such that $(t_\eps,\eta_\eps)\to (t_0,0)$ as $\eps\to0.$
Finally, by (i) of Proposition \ref{lem:4.1}, we deduce that $W_{\delta_{\varepsilon}(t ),\eta }+\Phi_{\delta_{\varepsilon}(t  ),\eta  }$
is a solution of   problem
(\ref{equ5})  which blows-up at the point $\xi_0$ as $\eps\to0.$
\end{proof}

\end{document}